\newtheorem{Cor}[equation]{Corollary}
\newtheorem{Thm}[equation]{Theorem}
\newtheorem*{theorem*}{Theorem}
\newtheorem{Def}[equation]{Definition}
\newtheorem{def-prop}[equation]{Definition-Proposition}
\newtheorem{prop}[equation]{Proposition}
\newtheorem{prop-def}[equation]{Proposition-Definition}
\newtheorem{Ex}[equation]{Example}
\newtheorem{Rem}[equation]{Remark}
\numberwithin{equation}{section}
\newcommand{\abs}[1]{\lvert#1\rvert}
\newcommand{\degree}[1]{\abs{#1}}
\newcommand{\id}{\operatorname{id}}
\newcommand{\R}{\mathbb{R}}
\newcommand{\Z}{\mathbb{Z}}
\newcommand{\N}{\mathbb{N}}
\newcommand{\g}{\mathfrak{g}}
\newcommand{\h}{\mathfrak{h}}
\newcommand{\bd}{\begin{displaymath}}
	\newcommand{\ed}{\end{displaymath}}
\newcommand{\be}{\begin{equation}}
	\newcommand{\ee}{\end{equation}}
\newcommand{\Sh}{\mathrm{Sh}}
\newcommand{\K}{\mathbb{K}}
\newcommand{\bsection}[1]{\Gamma(#1)}
\newcommand{\dBott}{d^\bott_A}
\def\bott{\mathrm{Bott}}
\def\bp{\begin{proof}}
\def\g{\mathfrak{g}}
\def\m{\mathfrak{m}}
\def\v{\mathscr{A}}
\def\id{\operatorname{id}}
\def\Der{\operatorname{Der}}
\def\Hom{\operatorname{Hom}}
\newcommand{\Linfty}{ L_\infty  }
\newcommand{\piepartial}{\eth}
\newcommand{\inputvariable}{\cdot}
\newcommand{\pairing}[2]{\langle #1,#2\rangle}
\newcommand{\rank}{\mathrm{rank}}
\newcommand{\dA}{d_A}
\newcommand{\Linftythree}{L_{\leqslant 3}}%{L(3)}%
\newcommand{{\newboundary}}{\kappa}
\newcommand{\Derivations}{\mathrm{Der}}
\newcommand{\CinfMK}{C^\infty(M,\K)}
\newcommand{\projection}{\mathrm{pr}}
\newcommand{\projectionA}{\mathrm{pr}_A}
\newcommand{\projectionB}{\mathrm{pr}_B}
\newcommand{\deltap}{\delta}
\newcommand{\deltas}{s}
\newcommand{\adLb}{{\mathrm{ad}_b  }}
\newcommand{\maximalidealofartin}{\m_\v}
\begin{document}

\begin{frontmatter}

%%  "Title of the Paper"
\title[Internal symmetry of the $L_{\leqslant 3}$ algebra arising from a Lie pair]{Internal symmetry of the $L_{\leqslant 3}$ algebra \\ arising from a Lie pair}

\begin{aug}
%%  \author{\fnms{John} \snm{Smith}\thanksref{t2}\ead[label=e1]{smith@foo.com}\ead[label=e2,url]{www.foo.com}}
%%  \thankstext{t2}{The author is supported by ...}
%%  \address{line 1\\ line 2\\ \printead{e1}\\\printead{e2}}

\author{\fnms{Dadi} \snm{Ni}\ead[label=e1]{ndd18@mails.tsinghua.edu.cn}},
\address{Department of Mathematics\\ Tsinghua University\\Beijing\\ China\\
	\printead{e1}}
\author{\fnms{Jiahao} \snm{Cheng}\ead[label=e2]{jiahaocheng@nchu.edu.cn}},
\address{College of Mathematics and Information Science / Center for Mathematical Sciences\\ Nanchang Hangkong University\\Nanchang\\ China\\
	\printead{e2}}
\author{\fnms{Zhuo}
\snm{Chen}\ead[label=e3]{chenzhuo@mail.tsinghua.edu.cn}},
\address{Department of Mathematics\\ Tsinghua University\\Beijing\\China\\
\printead{e3}}
\and
\author{\fnms{Chen}
	\snm{He}\thanksref{t2}\ead[label=e4]{che@ncepu.edu.cn}}
\address{School of Mathematics and Physics\\ North China Electric Power University\\Beijing\\ China\\	
	\printead{e4}}

\thankstext{t2}{Corresponding author.}

{\small\center\textit{Dedicated to Prof. Victor Guillemin on the occasion of his 85th birthday}}
%\author{\fnms{???} \snm{???}\thanksref{t2}\ead[label=e2]{???}}
%\address{???\\\printead{e2}}
%\and
%\author{\fnms{???} \snm{???}%
%        \ead[label=e3]{???}%
%        \ead[label=u1,url]{???}}
%\address{???\\\printead{e3}\\\printead{u1}}
%\thankstext{t2}{The author is supported by ...}
\end{aug}
%%  History:
%\received{\sday{3} \smonth{1} \syear{2019}}

\begin{abstract}
	A Lie pair is an inclusion $A$ to $L$ of Lie algebroids over the same base manifold. 
	In an earlier work, the third author with Bandiera, Sti\'{e}non, and Xu 
	introduced a canonical $L_{\leqslant 3}$ algebra
	$\Gamma(\wedge^\bullet  A^\vee \otimes  L/A)$ whose unary bracket is  
	the Chevalley-Eilenberg differential arising from every Lie pair $(L,A)$.
	In this note, we prove that to such a Lie pair there is an associated
	Lie algebra action by $\Derivations(L)$ on the $L_{\leqslant 3}$ algebra
	$\Gamma(\wedge^\bullet  A^\vee \otimes  L/A)$.
	Here $\Derivations(L)$ is the space of derivations on the Lie algebroid $L$, or infinitesimal automorphisms of $L$.
	The said action gives rise to a larger scope of gauge equivalences of Maurer-Cartan elements in
	$\Gamma(\wedge^\bullet  A^\vee \otimes  L/A)$, and for this reason we
	elect to call the $\Derivations(L)$-action internal symmetry of
	$\Gamma(\wedge^\bullet  A^\vee \otimes  L/A)$.
\end{abstract}

\begin{keyword}[class=AMS]
\kwd{58A50}
\kwd{17B70}
\kwd{16E45}
\end{keyword}

%%  Upper case for every keyword
\begin{keyword}
\kwd{$L_\infty$ algebra}
\kwd{$L_{\leqslant 3}$ algebra}
\kwd{dg algebra}
\kwd{Lie pair}
\kwd{Lie algebra action}
\end{keyword}

%\tableofcontents

\end{frontmatter}

\section{Introduction}

The present note is a continuation of the previous work \cite{BCSX}, in which   some homotopy-level structures from Lie pairs, known as $\Linfty$ algebroids,   shifted derived Poisson algebras and so forth were found. Now, we are more concerned with   intrinsic properties of these structures. The motivation here is detailed below.

Lie algebroids were introduced in the 1960s by Pradines \cite{Pradines} as a formalization of ideas
going back to the works of Lie and Cartan.  Recall that a Lie $\K$-algebroid
($\K$ is either $\mathbb{R}$ or $\mathbb{C}$) is a $\K$-vector bundle $L\to M$ over a smooth manifold $M$ such that $\Gamma(L)$ is a Lie-Rinehart $\K$-algebra \cite{Rinehart} over the commutative ring $C^\infty(M,\K)$. Namely, $\bsection{L}$ is equipped with a Lie bracket $[\cdot,\cdot]_L$, an anchor map    $\rho_L\colon L\to TM\otimes_{\R}\K$, and they are compatible (see Definition \ref{Def:LieKalgebroid}). In recent years, people tend to give the equivalent description of the Lie algebroid structure on $L$  from a dg manifold point of view \cite{Vaintrob1997}, namely, the Chevalley-Eilenberg differential operator $d_L:\bsection{\wedge^\bullet  L^\vee}\to \bsection{\wedge^{\bullet+1}  L^\vee}$ encodes $[\cdot,\cdot]_L$ and  $\rho_L$, and gives rise to a 
dg algebra $(\bsection{\wedge^\bullet  L^\vee},d_L)$.

Objects of Lie algebroids interpolate between tangent bundles, foliations
on the one hand and on the other
hand, Lie algebras and their actions on manifolds. As Lie algebroids combine the usual differential geometry and Lie
algebra theory under a common roof, it
suggests their relations with the realm of Mechanics. In fact, they form a category which is closely related to symplectic and Poisson manifolds, and there has been recently a lot of work and progress in the
geometric aspects of Hamiltonian  and Lagrangian  mechanics on Lie algebroids, see  e.g.  \cite{Grabowska,deLeon,Martinez,Weinstein}. 

We say that $(L, A)$ is a pair of Lie algebroids or \textit{Lie pair} for short if $A$ is a subalgebroid of another Lie algebroid $L$ over the same base manifold. The notion of  {Lie pair} is a natural framework encompassing a range of diverse geometric 
contexts including complex manifolds, foliations, matched pairs, and manifolds
endowed with an action of a Lie algebra, etc. 
In the last decade much research on Lie pairs has been done  following different  strategies and the underlying mathematical structures: Atiyah classes arising from Lie pairs have been studied, using a variety of methods, see e.g. \cite{CSX16,CXX,Batakidis};    
It is  shown that geometric objects including  Kapranov dg and Fedosov dg manifolds \cite{LSXadvance2021,SX20}, 
algebraic objects such as Hopf algebras \cite{CSX14,CCN},   
Leibniz$_\infty$ and $\Linfty$ algebras   can be derived from Lie pairs \cite{LSX12,BCSX,CLX}; 
Also, in the context of Lie pairs, considerable attentions had been paid to   Poincar\'{e}-Birkhoff-Witt isomorphisms \cite{CCT,Calaque},   Kontsevich-Duflo isomorphisms \cite{LSX,CXXarxiv2021}, and Rozansky-Witten-type invariants \cite{VX}, etc.

We remind the reader briefly of
the notion of Lie algebroid representation of $L$ on a vector bundle $V$ (over the same base manifold), also known as an $L$-action, or an $L$-module structure on $V$ ---  This means a $\K$-bilinear map $\nabla:~\bsection{L}\times \bsection{V}\to \bsection{V} $ which is $\CinfMK$-linear in its first argument and satisfying
\begin{enumerate}
	\item $\nabla_{l}(fv)=f\nabla_l v+ \rho_L(l)(f) v$, for all $l\in \bsection{L}$, $f\in \CinfMK$ and $v\in\bsection{V}$;
	\item $\nabla_{[l_1,l_2]_L}=\nabla_{l_1}\circ \nabla_{l_2}-\nabla_{l_2}\circ \nabla_{l_1}$, for all $l_1,l_2\in \bsection{L}$.	
\end{enumerate}  
If we turn to the language of dg algebras, such an action $\nabla$ is equivalent to a square zero derivation 
$d^\nabla_L$: $\bsection{\wedge^\bullet  L^\vee\otimes V}\to \bsection{\wedge^{\bullet+1}   L^\vee\otimes V}$ which is also called the Chevalley-Eilenberg differential  \cite{Vaintrob1997}. 
The pair $\big(\bsection{\wedge^\bullet  L^\vee\otimes V},d^\nabla_L\big)$ 
now becomes a dg module over the dg algebra
$(\bsection{\wedge^\bullet  L^\vee},d_L)$, and is known as the 
Chevalley-Eilenberg complex of $L$ with coefficients in $V$.

Notably, given a Lie pair $(L, A)$ over a base manifold $M$,
the quotient bundle $L/A$, which we usually denote by $B$,
naturally admits a Lie algebroid $A$-action known
as the Bott connection \cite{Bott1970,Bott}.
In specific, this action  which we denote by $\nabla$ is defined by  
\[\Gamma(A)\times\Gamma(B)\longrightarrow \Gamma(B): \quad (a,b)\longmapsto \nabla_a b:={\rm pr}_B[a,\tilde{b}]_L\]
where $a\in \bsection{A}$, $b\in \bsection{B}$, ${\rm pr}_B: L\to B$ stands for the canonical quotient map, and $\tilde{b}\in \bsection{L}$ satisfies ${\rm pr}_B(\tilde{b})=b$. 
The  Bott connection originates in
foliation theory: When we especially take $L = TM$  with $A$ the tangent bundle to
a foliation on $M$, and $B=L/A$ the normal bundle to the foliation, the action $\nabla$ becomes the canonical flat connection on $B$ along $A$ considered by Bott \cite{Bott1970}.

From the Lie algebroid $A$ and its action on $B$, we have the standard dg algebra 
$\Omega^\bullet _A:=\Gamma(\wedge^\bullet  A^\vee)$ and the dg module 
$\Omega_A^\bullet(B):= \Gamma(\wedge^\bullet  A^\vee \otimes L/A)$
(both equipped with Chevalley-Eilenberg  differentials). To interpret 
$\Omega_A^\bullet(B)$, consider  $\mathfrak{X}_A$ and $\mathfrak{X}_L$, the 
differentiable stacks determined by the local Lie groupoids
integrating the Lie algebroids $A$ and $L$, respectively.
The dg module $\Omega_A^\bullet(B)$ can be regarded as the space of formal  
vector fields tangent to the fibers of the differentiable 
stack fibration $\mathfrak{X}_A\to \mathfrak{X}_L$. 
We would like to mention another point of view towards foliations
on manifolds, which are particular instances of Lie pairs,
due to Vitagliano \cite{V2009,V2014,V2015}  ---  
Let $C\subset TM$ be an involutive  distribution on a finite dimensional smooth manifold $M$. This gives    an object of a \textit{diffiety} to which there is associated a rich cohomological calculus, also known as secondary calculus \cite{Vinogradov1984,Vinogradov1998,Vinogradov2001}.
For instance,
secondary functions give characteristic (de Rham) cohomology of $C$, i.e. $H_{\mathrm{dR}}\big(\Gamma(\wedge^\bullet  C^\vee )\big)$, and secondary vector fields give characteristic
cohomology with local coefficients in $C$-normal vector fields, i.e. 
$H_{\mathrm{dR}}\big(\Gamma\big(\wedge^\bullet  C^\vee \otimes TM/C\big)\big)$.  
Although the settings \textit{op. cit.} are foliations on smooth 
manifolds, we can adapt them to Lie pairs easily: given a general Lie pair 
$(L,A)$, one interprets the Chevalley-Eilenberg cohomology 
$H_{\mathrm{CE}}(A):=H\big(\Gamma(\wedge^\bullet A^\vee)\big)=H(\Omega_A^\bullet)$ as the secondary functions and  
$H_{\mathrm{CE}}(A;B):=H\big(\Gamma(\wedge^\bullet A^\vee \otimes B)\big)=H\big(\Omega_A^\bullet(B)\big)$
as the secondary vector fields stemming from a stack fibration 	
$\mathfrak{X}_A\to \mathfrak{X}_L$.

To the aforementioned space $\Omega_A^\bullet(B)$, there are associated two \emph{different} $\Linfty$ structures, one by \cite{LSX12} and the other by \cite{BCSX}. The latter one is relatively easier because it is indeed an $\Linftythree$ algebra (i.e., its structure  maps $[\cdots]_n$ are trivial for $n\geqslant 4$, see  Section \ref{Sec:LinftyLiePair}).
We wish to find more intrinsic properties of the both $\Linfty$ structures  but in this note we only handle the latter one and give the following result   --- the Lie algebra $\Derivations(L)$ of derivations of the Lie algebroid $L$ has an action on the said $\Linftythree$ algebra $\Omega_A^\bullet(B)$, see  Theorem \ref{MainTheorem}.

Why the $\Linftythree$-algebra $\Omega_A^\bullet(B)$, the central object of this study, is interesting? 
In fact, it is shown in \cite{BCSX} that $\Omega_A^\bullet(B)$ is a `resolution' of $H_{\mathrm{CE}}(A;B)=H\big(\Omega_A^\bullet(B)\big)$,
which admits a canonical graded Lie algebra structure.
As a consequence of our Theorem \ref{MainTheorem},
we find an \textit{action by derivation} of a Lie subalgebra of 
$\Derivations(L)$ on $H_{\mathrm{CE}}(A;B)$,
see Corollary \ref{Cor:actiononH}. Moreover, $\Omega_A^\bullet(B)$ is closely related with deformations of the Lie pair $(L,A)$. To be more precise,  a Maurer-Cartan element in  $\Omega_A^\bullet(B)$, i.e., a certain element  $\xi\in \Omega_A^1(B)\otimes \maximalidealofartin  $ (where $\maximalidealofartin $ is the maximal ideal of a local Artinian  $\K$-algebra $\v$; see Definition
\ref{Def:MC}), gives rise to a deformed Lie pair $(L,A_{\xi})$. It turns out that the aforementioned action by $\Derivations{(L)}$ on $\Omega^\bullet _A(B)$ determines a family of gauge equivalences of  such deformations. Limited by the length of this paper, we do not expand this relatively complicated topic. Detailed investigation and conclusions will be presented in our next work.

To prove Theorem \ref{MainTheorem}, we first need to specify the notion of a Lie algebra action on an $\Linfty$ algebra. We will borrow many known facts from the literature and give a formal definition and several  equivalent characterizations which should be of independent interest, see  Section \ref{Sec:LiealgebraactiononLinfty}.  We would like to point out that, in \cite{MehtaZambon},  Mehta and Zambon have introduced the concept of   $L_\infty$ algebra action on a graded manifold, which is more general than our definition of Lie algebra actions and could be a point for further studies  related to the present work.  

Further more, we propose to consider gauge actions on Maurer-Cartan elements in an $\Linfty$ algebra if it adopts a Lie algebra action. The idea follows from the original definition of gauge actions by Getzler \cite{Getzler}. When applying to the particular instance of $\Derivations(L)$-action on $\Omega_A^\bullet (B)$, we obtain a new type of gauge equivalence of Maurer-Cartan elements in  $\Omega_A^\bullet (B)$, which is compatible with the standard gauge equivalence in the sense of Getzler, see  Theorem \ref{Thm:adb=baction}. For this reason, we elect to call the action  by   $\Derivations(L)$    \textit{internal symmetry} of the $\Linftythree$  algebra $\Omega_A^\bullet (B)$.

\smallskip

\textbf{Terminology and notations.}

\textit{Field $\K$.} We use the symbol $\K$ to denote the field of either real or complex numbers.

\textit{Gradings.} 
Unless specified otherwise, all vector spaces, algebras, modules, etc., are 
$\Z$-graded objects over the field $\K$.
For a graded object $V=\oplus_{i\in \Z} V^{i}$, the degree of
a homogeneous element $v\in V^{i}$ is denoted by $\degree{v}=i$.
We write $V[k]$ as the degree $k$ shift of $V$ by the rule 
$(V[k])^i=V^{k+i}$, so the element $v[k]\in V[k]$ shifted from
$v\in V$ has degree $\degree{ {v[k]}} =\degree{v}-k$.
The elements $v[1]\in V[1]$ will also be denoted by $\tilde{v}$.

\textit{Tensor products.}
Given a graded vector space $V$, we denote its tensor (co)algebra and 
exterior (co)algebra over $\K$ respectively by 
$T(V):=\bigoplus_{n\geqslant  0}V^{\otimes n}$ and 
$\Lambda(V):=\bigoplus_{n\geqslant  0}\Lambda^n(V)$.
The symmetric (co)algebra and the reduced symmetric (co)algebra of $V$ are respectively denoted
by $S(V):=\bigoplus_{n\geqslant  0}S^n(V)$ and $\overline{S(V)}:=\bigoplus_{n\geqslant 1}S^n(V)$, whose multiplication symbols are written as $\odot$.

\textit{Shuffles.}  Let $\mathfrak{S}_n$ denote the permutation group of the set $\{1, 2, \cdots, n\}$. 
A $(p, \, q)$-shuffle is a permutation
$\sigma\in \mathfrak{S}_{p+q}$ 
such that
$\sigma(1)<\cdots < \sigma(p)$
and
$\sigma(p+1)<\cdots < \sigma(p+q)$.
The symbol $\mathrm{Sh}(p, \, q)$ denotes the set of $(p, \, q)$-shuffles.
Similarly, 
$\Sh(i,j,k)$ denotes the set of such $(i,j,k)$-shuffles, i.e., those  $\sigma \in \mathfrak{S}_{i+j+k}$  satisfying  $\sigma(1)<\cdots<\sigma(i)$,
$\sigma(i+1)<\cdots<\sigma(i+j)$, and $\sigma(i+j+1)<\cdots<\sigma(i+j+k)$.
For any shuffle $\sigma$, its sign is denoted by ${\rm sgn}(\sigma)$.

\textit{Koszul signs.}  
For homogeneous elements $v_1,\cdots,v_n\in V$ and 
$\sigma\in\mathfrak{S}_n$, the Koszul signs 
$\epsilon(\sigma;v_1,\cdots,v_n)$ and $\chi(\sigma;v_1,\cdots,v_n)$ are 
defined respectively by the equations
\begin{align*}
	v_1\odot\cdots\odot v_n&=\epsilon(\sigma;v_1,\cdots,v_n)v_{\sigma(1)}\odot\cdots\odot v_{\sigma(n)} \quad(\in S(V)),\\\mbox{ and }~ 
	v_1\wedge\cdots\wedge v_n&=\chi(\sigma;v_1,\cdots,v_n)v_{\sigma(1)}\wedge\cdots\wedge v_{\sigma(n)}\quad(\in \Lambda(V)).
\end{align*}
The two signs are related by
$\chi(\sigma;v_1,\cdots,v_n)={\rm sgn}(\sigma)\epsilon(\sigma;v_1,\cdots,v_n)$.

\textit{The d\'{e}calage isomorphism.} Given a graded vector space $V$, there are natural isomorphisms for any $n\in \N$:
\begin{equation*}
	{\rm dp}_n:~{S^n}(V[1])\to (\Lambda^nV)[n],\quad \tilde{v}_1\odot\cdots \odot \tilde{v}_n\mapsto (-1)^{\sum_{i=1}^n(n-i)\degree{v_i}}(v_1\wedge\cdots\wedge v_n)[n].
\end{equation*}

\textit{Lie algebroids.}
In this paper `Lie algebroid' always means
`Lie $\K$-algebroid'.

\textit{Abbreviations.}  The word `dg' stands for `differential graded'. Likewise, `dgla' stands for `differential graded Lie algebra'.
\subsection*{Acknowledgement}
We would like to thank Chuangqiang Hu and Maosong Xiang for useful comments. We gratefully acknowledge the critical review by the anonymous referee on an earlier version of the manuscript.
The research is supported by 
NSFC grant  12071241, Research Fund of Nanchang Hangkong University (EA202107232), and	the Fundamental Research Funds for the Central Universities (2020MS040).

%We gratefully acknowledge the critical reviews by the two anonymous reviewers on earlier versions of the manuscript.

\section{$L_\infty$ algebras}
\subsection{Definitions of $L_\infty$ algebras} 
We start with the definitions of various terms related to homotopy Lie algebras.
\begin{Def}\label{Def:Linfty-algebra}
	An \textbf{$L_\infty$ algebra}  is a graded vector space $\g$ equipped with a collection of skew-symmetric multilinear maps $[\cdots]_k: \Lambda^k \g\to \g $ of degree $(2-k)$, for all $k\geqslant  $1$ $, such that the higher Jacobi rules
	\smallskip
	\begin{equation}\label{Jacobi identities}
		\sum\limits_{{\begin{subarray}{c}
					i=1,\cdots,n \\
					\sigma \in \Sh(i,n-i)\end{subarray}}}  (-1)^i\chi(\sigma;x_1,\cdots,x_n) [[x_{\sigma(1)},\cdots,x_{\sigma(i)}]_i,x_{\sigma(i+1)},\cdots, x_{\sigma(n)}]_{n-i+1}=0,
	\end{equation}
	hold for all homogeneous elements $x_1,\cdots,x_n \in V$ and $n\geqslant  1$.
\end{Def}

\textbf{Notation:}  It is common to denote the unary bracket $[\inputvariable]_1$ by $d$.

The first three Jacobi rules are listed below:
\begin{itemize} \label{eq:3Jacobi}
	\item[$\bullet$] ($n=1$)	 $d^2=0,$
	\item[$\bullet$] ($n=2$) $d[x_1,x_2]_2=[d x_1,x_2]_2+(-1)^{\degree{x_1}}[x_1,d x_2]_2,$
	\item[$\bullet$] ($n=3$)
	\begin{align*}
		& [[x_1,x_2]_2,x_3]_2+(-1)^{1+\degree{x_2}\cdot\degree{x_3}}[[x_1,x_3]_2,x_2]_2\\
		&\qquad\qquad+(-1)^{\degree{x_1}(\degree{x_2}+\degree{x_3})}[[x_2,x_3]_2,x_1]_2\\
		=\,& d [x_1,x_2,x_3]_3+[d x_1,x_2,x_3]_3+(-1)^{1+\degree{x_1}\cdot\degree{x_2}}[d x_2, x_1,x_3]_3\\
		&\qquad\qquad+(-1)^{(\degree{x_1}+\degree{x_2})\degree{x_3}}[d x_3,x_1,x_2]_3.
	\end{align*}
\end{itemize}

In particular, an $L_\infty$ algebra  with $[\cdots]_k=0$ for $k\geqslant  3$ amounts to
a differential graded Lie algebra (dgla  for short), i.e., a triple $(V,[\inputvariable,\inputvariable],d)$,
where $(V=\bigoplus_{i\in\Z} V^i,d)$ is a differential graded vector space and $[\cdot,\cdot]\colon V\times V \to V$ is a graded Lie bracket, satisfying the above $n=2$ constraint.

\begin{Def}
	If an $L_\infty$ algebra has vanishing brackets $[\cdots]_k=0$ for $k\geqslant  4$, i.e.,
	 only the brackets $d$, $[\cdot,\cdot]_2$, and 
	$[\cdot,\cdot,\cdot]_3$ are possibly nontrivial, then it is called an
	\textbf{$L_{\leqslant 3}$ algebra}.
\end{Def}

\begin{Rem}
	In this paper, we follow the sign convention of Getzler \cite{Getzler} for the definition of $L_\infty$ algebras. The original definition of $L_\infty$ structure on $\g$ introduced by Lada and Markl \cite{LM95}
	means a family of brackets 
	$[\cdots]'_k: \Lambda^k(\g)\to \g $ of degree $(2-k)$ subject to the 
	higher Jacobi rules:
	\smallskip 
	\begin{equation}
	\begin{aligned}
		\label{Eqt:nJacabi-2}
		&\sum\limits_{{\begin{subarray}{c}
					i=1,\cdots,n \\
					\sigma \in \Sh(i,n-i)\end{subarray}}}  (-1)^{i(n-i)}  \chi(\sigma;x_1,\cdots,x_n)\\
		&\qquad\qquad\qquad[[x_{\sigma(1)},\cdots,x_{\sigma(i)}]'_i,x_{\sigma(i+1)},\cdots,
		x_{\sigma(n)}]'_{n-i+1}
		=0.	
	\end{aligned}
\end{equation}
	In fact, if we define
	$$[x_1,\cdots,x_k]_k:=(-1)^{\frac{k(k+1)}{2}} [x_1,\cdots,x_k]'_k,$$
	then the identity \eqref{Eqt:nJacabi-2} becomes
	\eqref{Jacobi identities}. 
\end{Rem}

\begin{Rem}\label{rmk:H(g)}
	For an $L_\infty$-algebra $\g$ whose unary bracket is $d$,
	since $d^2=0$, we have a cochain complex $(\g,d)$.
	The corresponding cohomology is denoted by $H(\g)$.
	Moreover, $H(\g)$ is equipped with a graded Lie algebra structure
	whose Lie bracket is induced from the binary bracket of $\g$. Note this $H(\g)$ is \emph{not}  $H_{\mathrm{CE}}(\g)$ which usually refers to the Chevalley-Eilenberg cohomology of the $L_\infty$ algebra $\g$.
\end{Rem}

\begin{Def}\label{Def:morphismLinfty}
	Let $(\g_1,[\cdots]^1_n)$ and $(\g_2,[\cdots]^2_n)$ be two $L_\infty$ algebras. 
	An \textbf{${L_\infty}$ morphism}   $F:~\g_1\to\g_2$ is a collection of multilinear maps $$F_n: \Lambda^n \g_1\to \g_2 $$
	(of degree $1-n$) 	satisfying the following equations:
	$$\begin{aligned}
		&\sum\limits_{{\begin{subarray}{c}
					p+q=n+1 \\
					\sigma \in \Sh(p,n-p)\end{subarray}}}\pm
		F_q\big([x_{\sigma(1)},\cdots,x_{\sigma(p)}]^1_p,
		x_{\sigma(p+1)},\cdots,x_{\sigma(n)}\big)\\
		&\quad=\sum\limits_{{\begin{subarray}{c}
					1\leqslant k \leqslant n \\
					i_1+\cdots+i_k=n\\
					\tau \in \Sh(i_1,\cdots,i_k)\end{subarray}}} \pm
		\big[F_{i_1}(x_{\tau(1)},\cdots,x_{\tau(i_1)}),\cdots,
		F_{i_k}(x_{\tau(i_1+\cdots+i_{k-1}+1)},\cdots,x_{\tau(n)})
		\big]^2_k
	\end{aligned}
	$$ for every $n\geqslant 1$ and $x_1,\cdots,x_n\in \g_1$. 
	Here we denote by $(\pm)$ the appropriate Koszul signs
	(which can be worked out explicitly).
\end{Def}

There is another version of definition for an $L_\infty$ algebra, known as $L_\infty[1]$ algebras, see \cite{Voronov,Schatz}.
\begin{Def}\label{Defn:Linftyone}
	An \textbf{$L_\infty[1]$ algebra}  is a graded vector space $W$ equipped with a collection of symmetric multilinear maps $\{\cdots\}_k: S^k(W)\to W$, all being degree $1$  for $k\geqslant  $1$ $, such that the higher Jacobi rules
	\smallskip
	\begin{equation}\label{Jacobi identities1}
	\sum\limits_{{\begin{subarray}{c}
				i=1,\cdots,n \\
				\sigma \in \Sh(i,n-i)\end{subarray}}} \epsilon(\sigma;w_1,\cdots,w_n) 
		\big\{\{w_{\sigma(1)},\cdots,w_{\sigma(i)}\}_i,w_{\sigma(i+1)},\cdots, w_{\sigma(n)}\big\}_{n-i+1}=0
	\end{equation}
	hold for all homogeneous elements $w_1,\cdots,w_n \in W$ and $n\geqslant  1$.
	%	We usually speak of the $L_\infty[1]$ algebra  $W$, without specifying the operation $\{\cdots\}_k $.
\end{Def}

A basic fact we need is the bijection between $L_\infty$ algebra structures on a graded vector space $\g$ and $L_\infty[1]$ algebra structures on $\g[1]$ --- Suppose that $(\g,[\cdots]_n)$ is an $L_\infty$ algebra. Following the d\'{e}calage isomorphism, we define the following $n$-brackets:
\begin{equation*}
	{S^n}(\g[1])\to \g[1],\quad
	\{\tilde{x}_1,\cdots,\tilde{x}_n\}_n:=(-1)^{\frac{n(n+1)}{2}+\sum_{i=1}^n(n-i)\degree{x_i}}([x_1,\cdots,x_n]_n)[1],
\end{equation*} where $x_1,\cdots,x_n\in \g$. 
Then one can verify that $(\g[1],\{\cdots \}_n)$ is an $L_\infty[1]$ algebra. 

Another relation we need is about the Koszul signs:
\begin{equation*}\label{Eqt:sign}
	(-1)^{\sum_{i=1}^n(n-i)\degree{x_{\sigma(i)}}}\epsilon(\sigma;\tilde{x}_1,\cdots,\tilde{x}_n)=(-1)^{\sum_{i=1}^n(n-i)\degree{x_i}}\chi(\sigma;x_1,\cdots,x_n),
\end{equation*}
where $\sigma\in \mathfrak{S}_n$,  $\epsilon(\sigma;\tilde{x}_1,\cdots,\tilde{x}_n)$ and $\chi(\sigma;x_1,\cdots,x_n)$ are defined in $S^n(\g[1])$ and $\Lambda^n(\g)$ respectively.

\subsection{$L_\infty$ structure in terms of dg coalgebras}

The notions of $L_\infty$ and $L_\infty[1]$ algebras can be wrapped up in  the language of dg coalgebras. Here is a quick review of this fact. One could also consult \cite{Manetti} and \cite{Fukaya}.

Let $C$ be a graded coalgebra with a comultiplication
$\Delta:~C\to C\otimes C$. A degree $m$ \textbf{coderivation} of 
$(C,\Delta)$ is a degree $m$ graded linear map
$D:~C\to C$ satisfying the coLeibniz rule
$\Delta D=(D\otimes\id_C+\id_C\otimes D)\Delta$. 
A \textbf{codifferential} is a degree $1$ coderivation $D$ with $D^2=0$.

\textbf{Notation:}  The vector space of all coderivations on $C$ is denoted by
${\rm Coder}(C):=\bigoplus_{m\in\Z}{\rm Coder}^m(C)$
which is naturally graded by the degrees.
%We say that an element $D\in{\rm Coder}^n(C)$ has degree $n$
%and we write $\degree{D}=n$.
With respect to the composition $\circ$, the space ${\rm Coder}(C)$
has the graded Lie bracket 
$[F,G]:=F\circ G-(-1)^{\degree{F}\degree{G}}G\circ F$  for homogeneous elements $F,G\in{\rm Coder}(C)$.
If $D:~C\to C$ is a codifferential, then $[\cdot,\cdot]$ and $d=[D,\cdot]$
give a dgla  structure on ${\rm Coder}(C)$.

We will mainly work with two coalgebras associated with a given 
graded vector space $V$: the symmetric coalgebra  
$S(V)=\bigoplus_{n\geqslant 0}S^n(V)$ and the reduced one 
$\overline{S(V)}=\bigoplus_{n\geqslant 1}S^n(V)$. 
The comultiplication on $S(V)$ is given as follows:
$$
\begin{aligned}
&\mathfrak{b}(v_1\odot\cdots\odot v_n)\\
&\quad= \sum\limits_{{\begin{subarray}{c}
			r=0,\cdots,n \\
			\sigma \in \Sh(r,n-r)\end{subarray}}}
		\epsilon(\sigma;v_1,\cdots,v_n)(v_{\sigma(1)}\odot\cdots\odot v_{\sigma(r)})\otimes(v_{\sigma(r+1)}\odot\cdots\odot v_{\sigma(n)}),
		\end{aligned}$$
where $v_1,\cdots,v_n\in V$ are homogeneous elements.
In particular, for a homogeneous element $v\in V$, we have 
$\mathfrak{b}(v)=1\otimes v+v\otimes 1$.
For $1\in S^0(V)=\K$, we have $\mathfrak{b}(1)=1\otimes 1$.
The counit $\varepsilon:~S(V)\to \K=S^0(V)$ is the natural projection.

Similarly, the comultiplication on the reduced symmetric coalgebra  
$\overline{S(V)}$ is given as follows:
$$
\begin{aligned}
&\mathfrak{l}(v_1\odot\cdots\odot v_n)\\
&\quad=\sum\limits_{{\begin{subarray}{c}
			r=1,\cdots,n-1 \\
			\sigma \in \Sh(r,n-r)\end{subarray}}} \epsilon(\sigma;v_1,\cdots,v_n)(v_{\sigma(1)}\odot\cdots\odot v_{\sigma(r)})\otimes(v_{\sigma(r+1)}\odot\cdots\odot v_{\sigma(n)}),
\end{aligned}$$
where $v_1,\cdots,v_n\in V$ are homogeneous elements.
In particular, for a homogeneous element $v\in V$, we have 
$\mathfrak{l}(v)=0$. Let $p:~S(V)\to \overline{S(V)}$ be the projection
with kernel $\K=S^0(V)$. It is easy to check that 
$\mathfrak{l}p=(p\otimes p)\mathfrak{b}$, 
i.e., $p$ is a morphism of graded coalgebras.

%\label{Lem:Fukaya}
Given a graded vector space $V$, every coderivation $D:~S(V)\to S(V)$ of degree $m$ is completely determined by its corestriction 
${\rm pr}_1\circ D=(D_0,D_1,D_2,$ $\cdots)$, where ${\rm pr}_1:~S(V)\to V$ denotes the canonical projection, and $D_k$ is the composition $S^k(V)\xrightarrow{D} S(V) \xrightarrow{{\rm pr}_1} V$, which is a graded vector space morphism of degree $m$. In other words,   there exists an isomorphism of graded vector spaces
\begin{equation}\label{ISO:Fukaya}
	{\rm Coder} (S(V))\cong\Hom (S(V),V),
\end{equation} 
whose inverse is given by the formula
\begin{align*}
	&D(v_1\odot \cdots\odot v_n)\\
	&\quad=D_0(1)\odot v_1\odot \cdots\odot v_n\\
	&\qquad+\sum\limits_{{\begin{subarray}{c}
				k=1,\cdots,n \\
				\sigma \in \Sh(k,n-k)\end{subarray}}}
	\epsilon(\sigma;v_1,\cdots,v_n)\\
	&\qquad\qquad\qquad\qquad D_k(v_{\sigma(1)}\odot\cdots\odot v_{\sigma(k)})\odot v_{\sigma(k+1)}\odot\cdots\odot v_{\sigma(n)},
\end{align*}
where $v_1,\cdots,v_n\in V$ are homogeneous elements.
Following this proposition, we use the notation 
$D=(D_0,D_1,D_2,\cdots)$ to denote a coderivation of $S(V)$. 

Similarly, for the reduced symmetric coalgebra $\overline{S(V)}$, 
the map $R\mapsto {\rm pr}_1\circ R=(R_1,R_2,\cdots)$
with $R_k:= {\rm pr}_1 \circ R|_{S^k(V)}$  
gives an isomorphism of graded vector spaces
\begin{equation}
	\label{ISO:reduced}{\rm Coder} (\overline{S(V)})\cong
	\Hom (\overline{S(V)},V),
\end{equation} 
whose inverse is given by the formula
$$
\begin{aligned}
&R(v_1\odot \cdots\odot v_n)\\
&\quad=\sum\limits_{{\begin{subarray}{c}
			k=1,\cdots,n \\
			\sigma \in \Sh(k,n-k)\end{subarray}}}
		\epsilon(\sigma;v_1,\cdots,v_n)R_k(v_{\sigma(1)}\odot\cdots\odot v_{\sigma(k)})\odot v_{\sigma(k+1)}\odot\cdots\odot v_{\sigma(n)},
		\end{aligned}$$
where $v_1,\cdots,v_n\in V$ are homogeneous elements.

By these two identifications, a coderivation 
$R=(R_1,R_2,\cdots)$ on $\overline{S(V)}$ is equivalent to a coderivation 
$R=(R_0,R_1,R_2,\cdots)$ on $S(V)$ with $R_0=0$.
On the other hand, given a coderivation $D=(D_0,D_1,D_2,\cdots)$ on $S(V)$,
we have a truncated coderivation $\overline{D}:=(D_1,D_2,\cdots)$ on
$\overline{S(V)}$.  In particular, if  $Q\in{\rm Coder}^1
(\overline{S(V)})$ is a codifferential, then $Q$ induces dgla  structures on both ${\rm Coder}(\overline{S(V)})$ and ${\rm Coder}(S(V))$, where the former is a sub-dgla  of the latter.

For a degree $i$ coderivation $R\in {\rm Coder}^i(\overline{S(V)})$
and a degree $j$ vector $v\in V^j$, we can define a degree $(i+j)$ 
coderivation $v\lrcorner R \in {\rm Coder}^{i+j}(\overline{S(V)})$ 
with components given by
\begin{equation*}
	\label{Eqt:contraction}
	(v\lrcorner R)_n(v_1\odot\cdots\odot v_n):=(-1)^{ij}R_{n+1}(v\odot v_1\odot\cdots\odot v_n),  
\end{equation*}
$ \forall n\geqslant 1, v_1\odot \cdots\odot v_n \in S^n(V)$.
One can check that 
%\begin{equation}
\begin{equation*}
	v\lrcorner R=-\overline{[v^\#,R]},  \quad   (-1)^{ij}R(v)=(-1)^{ij}R_1(v)=-{[v^\#,R]_0}(1)=-{[v^\#,R]}(1),
\end{equation*}
or equivalently,
\begin{equation}\label{eq:lrcorner}
	 -{[v^\#,R]}= \big((-1)^{ij}R(v)\big)^\#+v\lrcorner R , 
\end{equation}
%\end{equation}
where $v^\#=\big((v^\#)_0=v,0,0,\ldots\big)\in{\rm Coder}(S(V))$.

Finally, we need the fact that \textit{there is a bijection between $L_\infty$ 
	algebra structures on a graded vector space $\g$ and codifferentials on 
	the reduced symmetric coalgebra $\overline{S(\g[1])}$.} In fact, it suffices 
to consider $L_\infty[1]$ algebra structures on $\g[1]$. 
Suppose that a sequence of maps $\{\cdots\}_k: S^k(\g[1])\to \g[1]$,
all being degree $1$ for $k\geqslant 1$, define an $L_\infty[1]$ structure  
on $\g[1]$ (see Definition \ref{Defn:Linftyone}).
Set $Q_k=\{\cdots\}_k$ and consider $Q=(Q_1,Q_2,\cdots)\in  {\rm Coder}^1(\overline{S(\g[1])})$,
then the higher Jacobi identities \eqref{Jacobi identities1}
are equivalent to the identity $Q^2=0$, i.e., $Q$ is a codifferential 
on $\overline{S(\g[1])}$.

Throughout the note, we shall assume the correspondence between
an $L_\infty$ algebra $\g$ and its associated dg coalgebra  
$(\overline{S(\g[1])},Q)$ by default. Of course, the codifferential $Q$ also gives rise to a dg coalgebra $( {S(\g[1])},Q)$. 
In this context,  a morphism of $L_\infty$ algebras $\g_1 \to \g_2 $ translates to  a morphism of dg coalgebras $(\overline{S(\g_1 [1])},Q_1)\to (\overline{S(\g_2 [1])},Q_2)$, and $( {S(\g_1 [1])},Q_1)\to ( {S(\g_2 [1])},Q_2)$ as well.

\subsection{The $\Linftythree$ algebra  arising from a Lie pair}	\label{Sec:LinftyLiePair}
\begin{Def}\label{Def:LieKalgebroid}
	Let $M$ be a smooth manifold. A \textbf{Lie algebroid} over $M$ consists of a $\K$-vector bundle $E\to M$, a vector bundle map $\rho_E\colon E\to TM\otimes_{\R}\K$, called anchor, and a Lie algebra bracket $[\cdot,\cdot]_E$ on the space of sections $\bsection{E}$,
	such that $\rho_E$ induces a Lie algebra homomorphism from $\bsection{E}$ to $\mathscr{X}(M)\otimes_{\R}{\K}$, and the Leibniz rule
	$$[u,fv]_E=\big(\rho_E(u)f\big)v+f[u,v]_E$$
	is satisfied for all $f \in C^{\infty} (M,{\K})$ and $u,v \in \Gamma(E)$. Such a Lie algebroid is denoted by the triple $(E,[\cdot,\cdot]_E,\rho_E)$.
\end{Def}

\begin{Def}
	By a \textbf{Lie algebroid pair} (Lie pair for short) $(L,A)$,
	we mean a Lie algebroid $L$ together with a Lie subalgebroid $A$ of $L$ 
	over the same base manifold $M$ (we will often omit to write $M$).
	%In other words,
	%$A$ is a vector sub-bundle of $L$ such that the Lie bracket 
	%$[\cdot,\cdot]_L$ is closed when restricted to $\Gamma(A)$. 
	%Alternatively, we can regard  $$i\colon A=(A,[\cdot,\cdot]_A,\rho_A) \to %L=(L,[\cdot,\cdot]_L,\rho_L)$$ as an embedding of Lie algebroids over %$M\stackrel{\id}{\to} M$.
\end{Def}

Given a Lie pair $(L,A)$, one can choose a complement $B$ of $A$
in $L$ and identify the quotient bundle $L/A\cong B$
though it is not canonical.
In the sequel, we fix such an embedding $B\hookrightarrow L$ and
hence get a fixed decomposition $L\cong A\oplus B$.
The projections from $L$ to $A$ and $B$ are denoted by ${\rm pr}_A$ and ${\rm pr}_B$.

The \textbf{Bott connection} $\nabla$ of $A$ on $B$ is given by 
$$\Gamma(A)\times\Gamma(B)\longrightarrow \Gamma(B): \quad (a,b)\longmapsto \nabla_a b:={\rm pr}_B[a,b]_L.$$
In fact, this definition does not depend on the choice of a decomposition 
$L\cong A\oplus B$.  Therefore, the quotient $B$ of a Lie pair $(L,A)$ is 
canonically an $A$-module, i.e.,
$$\nabla_a (fb)=(\rho_A(a)f)b+f(\nabla_a b),$$
$$\nabla_{[a_1,a_2]_A}b=\nabla_{a_1}(\nabla_{a_2}b)-\nabla_{a_2}(\nabla_{a_1}b),$$
where $a,a_1,a_2\in\Gamma(A)$, $b\in\Gamma(B)$, $f\in \CinfMK $.

Similarly, there is a $B$-operation $\eth$ on $A$ though it depends on the decomposition $L\cong A\oplus B$:
$$\Gamma(B)\times\Gamma(A)\longrightarrow \Gamma(A):\quad
(b,a)\longmapsto \eth_b a:={\rm pr}_A[b,a]_L.$$
Furthermore, the operation $\piepartial_b\colon \Gamma(A)\to \Gamma(A),\forall b \in \Gamma(B)$ 
induces a dual operation on $\Gamma(A^\vee)$ given by
$$\pairing{\piepartial_b u}{a}
=\rho_L(b)\pairing{u}{a}-\pairing{u}{\piepartial_b a} ,\quad
\forall u \in \Gamma(A^\vee),\forall a \in \Gamma(A).$$
More generally, the operation $\piepartial_b$ extends to a derivation on $\Gamma(\Lambda^p A^\vee),\forall p \in \mathbb{N}$ by the Leibniz rule.

Next, introduce the following maps for any $b_1, b_2\in\Gamma(B)$:
	\begin{align*}
		[\cdot,\cdot]_B: ~\Gamma(B)\times\Gamma(B)\to\Gamma(B), \quad [b_1,b_2]_B&:={\rm pr}_B[b_1,b_2]_L,\\
		\beta(\cdot,\cdot):~ \Gamma(B)\times\Gamma(B)\to\Gamma(A), \quad \beta(b_1,b_2)&:={\rm pr}_A[b_1,b_2]_L,
	\end{align*}
where the bracket $[\cdot,\cdot]_B$ does not necessarily satisfy the Jacobi identity.

With these structure maps fixed, the Lie algebroid structure on $L=A\oplus B$ can be described as follows:
\begin{equation*}\label{Liealgebroidstructure}
	\begin{cases}
		[a_1,a_2]_L=[a_1,a_2]_A,\\
		[b_1,b_2]_L=\beta(b_1,b_2)+[b_1,b_2]_B,\\
		[a,b]_L=-\eth_b a+\nabla_a b,\\
		\rho_L(a+b)=\rho_A(a)+\rho_B(b),
	\end{cases}
	\forall a,a_1,a_2\in\Gamma(A),  b,b_1,b_2\in\Gamma(B),
\end{equation*}
where $\rho_A,\,\rho_B$ are the restrictions $\rho_L|_A,\,\rho_L|_B$ respectively.

%Introduce the following identities:
%$$[b_1,[b_2,b_3]_B]_B+[b_3,[b_1,b_2]_B]_B+[b_2,[b_3,b_1]_B]_B=
%\nabla_{\beta(b_2,b_3)}b_1+\nabla_{\beta(b_3,b_1)}b_2+\nabla_{\beta(b_1,b_2)}b_3,$$\\

Consider the spaces of $A$-forms and $B$-valued $A$-forms:
$$\Omega_A^\bullet = \bigoplus \limits_{k=0}^{\rank(A)}\Gamma(\Lambda^k A^\vee),\qquad \Omega^\bullet_A(B)=\bigoplus \limits_{k=0}^{\rank(A)}\Gamma(\Lambda^k A^\vee \otimes B).$$
Let $\omega\in\Omega^\bullet_A$ and 
$\lambda\in\Omega^\bullet_A,b\in \Gamma(B)$ so that $\lambda\otimes b \in \Omega^\bullet_A(B)$. It is clear that $\Omega^\bullet_A(B)$ is an $\Omega_A^\bullet$-module: 
$\omega \cdot (\lambda\otimes b):=(\omega \wedge \lambda) \otimes b$.
Also, both  $\Omega^\bullet_A$ and  $\Omega_A^\bullet(B)$ are equipped with the standard Chevalley-Eilenberg differentials as follows:
\begin{align}\nonumber 
	(\dA \omega)(a_1,\cdots,a_{k+1})=&\sum\limits_{i=1}^{k+1} (-1)^{i+1}
	\rho_A (a_i) \big(\omega(a_1,\cdots,\hat{a_i},\cdots,a_{k+1})\big)\\\nonumber 
	& +\sum\limits_{i<j} (-1)^{i+j} \omega([a_i,a_j]_A,a_1,\cdots,\hat{a_i},\cdots,\hat{a_j},\cdots,a_{k+1}),
\end{align}
\begin{align}\label{Eqt:dABot} 
	(\dBott   {X})(a_1,\cdots,a_{k+1})=&\sum\limits_{i=1}^{k+1} (-1)^{i+1} \nabla_{a_i}\big({X}(a_1,\cdots,\hat{a_i},\cdots,a_{k+1})\big)\\\nonumber 
	& +\sum\limits_{i<j} (-1)^{i+j} {X}([a_i,a_j]_A,a_1,\cdots,\hat{a_i},\cdots,\hat{a_j},\cdots,a_{k+1}),
\end{align}
where $\omega \in \Omega_A^k,{X} \in \Omega^k_A(B),a_i\in\Gamma(A)$.
Under these differentials, the space $(\Omega^\bullet_A,\dA )$
is a dg algebra and the space $(\Omega^\bullet_A(B),\dBott)$ is a dg   
$\Omega^\bullet_A$-module.

A vector field of degree $n$ on the graded manifold $A[1]$ is a derivation of degree $n$ of the algebra $\Omega_A^\bullet=C^\infty(A[1])$, i.e., a linear map $\varsigma\colon \Omega_A^\bullet  \rightarrow   \Omega_A^{\bullet + n} $ such that
the graded Leibniz rule
$$\varsigma(\xi \wedge \eta)=(\varsigma\xi)\wedge \eta +(-1)^{n\cdot |\xi|} \xi \wedge (\varsigma\eta)$$
holds for all homogeneous elements
$\xi,\eta \in \Omega_A^\bullet$. 

\textbf{Notation:}  Let us denote by $\Der^n (\Omega_A^\bullet )$
the set of degree $n$ derivations of $\Omega_A^\bullet$.

\begin{Thm}\label{Thm:Linftystructuregenerator}\cite[Proposition 4.3]{BCSX}
	Let $(L,A)$ be a Lie pair. Given a decomposition $L\cong A\oplus B$, there exists an induced ${\Linftythree}$ algebroid  structure on the graded vector bundle $A[1]\times B\to A[1]$ whose structure maps are given as follows:

	\begin{enumerate}
		\item[(1).]  The zeroth anchor $\rho_0$ is $\dA : \Omega^\bullet_A\to\Omega^{\bullet+1}_A$.\\
		
		\item[(2).] The unary anchor
		$\rho_1: \Omega^i_A(B)\to\Der^i(\Omega^\bullet_A)$
		is given by
		$$\rho_1(\lambda\otimes b)\omega=\lambda\cdot(\piepartial_b \omega), \qquad \qquad \forall \lambda\otimes b\in\Omega^i_A(B),  \omega\in\Omega^\bullet_A.$$
		
		\item[(3).] The binary anchor $\rho_2:\Omega^i_A(B)\times\Omega^j_A(B)\to\Der^{i+j-1}(\Omega^\bullet_A)$
		is given by
		$$\rho_2(\lambda\otimes b, \lambda'\otimes b')\omega=
		(-1)^{|\lambda|+|\lambda'|+1}
		(\lambda\wedge\lambda')\cdot(i_{\beta(b,b')}\omega), $$
		$ \forall \lambda\otimes b,\lambda'\otimes b'\in\Omega^i_A(B),  \omega\in\Omega^\bullet_A$.
		\item[(4).]  The unary bracket $d=[\cdot]_1$ is
		$\dBott\colon\Omega^\bullet_A(B)\to \Omega^{\bullet+1}_A(B)  $.\\
		\item[(5).]  The binary bracket
		$[\cdot,\cdot]_2: \Omega^i_A(B)\times\Omega^j_A(B)\to\Omega^{i+j}_A(B)$
		is generated by the relations
		\begin{align*}
			[b,b']_2&=[b,b']_B,\\
			[X,\omega\cdot Y]_2&=(\rho_1(X)\omega)\cdot  Y+(-1)^{|\omega|\cdot|X|} \omega \cdot [X,Y]_2,
		\end{align*}
	 $\forall b,b' \in \Gamma(B), X,Y \in \Omega^\bullet_A(B), \omega \in \Omega^\bullet_A$.
		\item[(6).]  The ternary bracket
		$[\cdot,\cdot,\cdot]_3:  \Omega_A^p(B) \times \Omega_A^q(B) \times \Omega_A^r(B) \rightarrow \Omega_A^{p+q+r-1}(B)$
		is generated by the relations
		\begin{align*}
			[b,b',b'']_3&=0, \\
			[X,Y,\omega \cdot  Z]_3&=(\rho_2(X,Y)\omega)\cdot  Z+(-1)^{|\omega|(|X|+|Y|+1)} \omega \cdot [X,Y,Z]_3,
		\end{align*}
	 $\forall b,b',b''\in \Gamma(B),	 X,Y,Z \in \Omega^\bullet_A(B), \omega \in \Omega^\bullet_A$.
		\item[(7).] The other higher anchors and brackets all vanish.
	\end{enumerate}
	
\end{Thm}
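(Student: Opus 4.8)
The plan is to recover the claimed $\Linftythree$ algebroid structure directly from the Lie algebroid $L$ itself, read through the splitting $L\cong A\oplus B$. Recall that a Lie algebroid structure on $L$ is the same datum as its Chevalley--Eilenberg differential $d_L$, a degree $1$ square-zero derivation of $\Omega_L^\bullet=\bsection{\Lambda^\bullet L^\vee}$, i.e.\ a homological vector field on the graded manifold $L[1]$. The decomposition $L\cong A\oplus B$ induces $L[1]\cong A[1]\times_M B[1]$ together with the bigrading $\Omega_L^\bullet=\bigoplus_{p,q}\bsection{\Lambda^p A^\vee\otimes\Lambda^q B^\vee}$, so that $\Omega_L^\bullet$ is the algebra of fibrewise-polynomial functions on $A[1]\times B[1]\to A[1]$ with coefficients in $\Omega_A^\bullet$; and $A[1]\times B[1]$ is exactly the degree shift of the graded vector bundle $A[1]\times B\to A[1]$ appearing in the statement. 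Under the standard coalgebra description of homotopy Lie structures --- generalizing, from $L_\infty$ algebras to $L_\infty$ algebroids over a dg base, the bijection between $L_\infty$ algebras and codifferentials recalled above --- an $\Linftythree$ algebroid structure on $A[1]\times B\to A[1]$ is the same as a homological vector field on $A[1]\times B[1]$, and its structure maps $\rho_k$ and $[\cdots]_k$ are read off as the Taylor coefficients of that vector field along the zero section $A[1]\hookrightarrow A[1]\times B[1]$: the horizontal coefficients yield the anchors $\rho_k\colon S^k(\Omega^\bullet_A(B)[1])\to\Der(\Omega_A^\bullet)$ and the vertical coefficients the brackets $[\cdots]_k$, with all Koszul and d\'ecalage signs fixed by that identification.

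With this dictionary, the hypothesis that $A$ is a subalgebroid of $L$ translates into the statement that the ideal $\mathcal I\subseteq\Omega_L^\bullet$ generated by $\bsection{B^\vee}$ --- the vanishing ideal of the zero section $A[1]\hookrightarrow A[1]\times B[1]$ --- is $d_L$-stable; equivalently, $d_L$ is tangent to $A[1]$ and restricts there to the homological vector field $d_A$. This is the zeroth anchor $\rho_0$ of item~(1), and it simultaneously rules out a $0$-ary ``curvature'' bracket $[\cdot]_0\in\Omega_A^1(B)$, so the structure is of the uncurved kind matching item~(7).

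Next I would compute $d_L$ on the algebra generators $C^\infty(M,\K)$, $\bsection{A^\vee}$ and $\bsection{B^\vee}$ using the explicit bracket of $L=A\oplus B$ in terms of $[\cdot,\cdot]_A$, $[\cdot,\cdot]_B$, $\beta$, $\nabla$, $\piepartial$ and $\rho_A,\rho_B$. Because the only operations entering are at most quadratic in the $B$-direction --- $\beta$ and $[\cdot,\cdot]_B$ are bilinear in $\bsection{B}$, $\nabla$ and $\piepartial$ are linear in it, and $[\cdot,\cdot]_A,\rho_\bullet$ do not involve it --- the Taylor expansion of $d_L$ along the zero section terminates: it yields no anchor $\rho_k$ for $k\geqslant 3$ and no bracket $[\cdots]_k$ for $k\geqslant 4$, while the ternary bracket is the Leibniz extension of its vanishing value $[b,b',b'']_3=0$, i.e.\ the bracket forced on $\Omega_A^\bullet$-multiples by the binary anchor $\rho_2$; this is item~(7). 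A term-by-term read-off of the remaining coefficients gives $d=\dBott$ built from $\nabla$ (item~(4)), $[b,b']_2=[b,b']_B$ with its $\Omega_A^\bullet$-Leibniz extension (item~(5)), the unary anchor $\rho_1(\lambda\otimes b)=\lambda\cdot\piepartial_b(\cdot)$ coming from the $A$-component $-\piepartial_b a$ of $[b,a]_L$ (item~(2)), and the binary anchor $\rho_2$ coming from $\beta$ via $i_{\beta(b,b')}$ (item~(3)). Here it is cleanest to pin down $\rho_1,\rho_2$ on the generators $\lambda\otimes b$ and the brackets on $\bsection{B}$, then propagate by the Leibniz identities of items~(5)--(6) --- legitimate because $d_L$ is a derivation of $\Omega_L^\bullet$.

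Finally, the whole list of $\Linftythree$ algebroid axioms --- $d^2=0$ for $d=\dBott$, compatibility of $\rho_\bullet$ with $d_A$ and with the $\Omega_A^\bullet$-module structure, the higher Jacobi identities for the $[\cdots]_k$, and the anchor--bracket compatibilities --- becomes, under the above translation, the single identity $d_L^2=0$, which holds because $L$ is a Lie algebroid. I expect the real work to lie not in any Jacobi computation (none remains once $d_L^2=0$ is invoked) but in two bookkeeping tasks: formulating precisely the equivalence ``$\Linftythree$ algebroid over the dg base $A[1]$ $\longleftrightarrow$ homological vector field on the shifted bundle tangent to the base'', so that the anchor genuinely lands in $\Der(\Omega_A^\bullet)$ and behaves as a derived bracket; and tracking the d\'ecalage and Koszul signs so that the $\beta$-part of $d_L$ reproduces the signs printed in items~(3) and~(6). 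A more pedestrian route --- writing out all the higher Jacobi and anchor identities for the explicit formulas~(1)--(7) and verifying them by hand --- would also work but is considerably longer.
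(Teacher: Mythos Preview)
The paper does not actually prove this theorem: it is quoted verbatim from \cite[Proposition~4.3]{BCSX} and no argument is supplied here, so there is no ``paper's own proof'' to compare against. That said, your strategy is sound and is precisely the one underlying the cited result. The key observation --- that $d_L$ is a homological vector field on $L[1]\cong A[1]\times_M B[1]$, tangent to the zero section $A[1]$ because $A$ is a subalgebroid, and that its Taylor expansion along that section has horizontal components giving the anchors $\rho_k$ and vertical components giving the brackets $[\cdots]_k$ --- is exactly Voronov's higher derived bracket picture specialized to this situation, and is how \cite{BCSX} arrives at the structure. Your remark that the expansion terminates at order two in the $B$-direction (because the Lie bracket on $L$ is bilinear) correctly accounts for the $\Linftythree$ truncation and the vanishing of $[b,b',b'']_3$, and your reduction of all the algebroid axioms to the single equation $d_L^2=0$ is the right conceptual shortcut. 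The only genuine labor, as you anticipate, is the sign bookkeeping in the d\'ecalage; the formulas in items (2)--(6) are what one obtains after that bookkeeping is done.
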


We further find  direct formulas of higher structure maps of $\Omega^\bullet_A(B)$. Similar results are found in \cite{Schatz-Zambon}.

\begin{prop}\label{Prop:2and3bracket} The binary and the ternary brackets   of the ${\Linftythree}$ algebra   $\Omega^\bullet_A(B)$ are expressed as follows:
\begin{itemize}\item[(1)]	
		For all $X \in \Omega_A^p(B)$ and $Y \in \Omega_A^q(B)$, the $2$-bracket $[X,Y]_2\in \Omega_A^{p+q}(B)$ is determined by
		\begin{align*}
			& \quad [X,Y]_2(a_1,\cdots, a_{p+q})\\
			&=\sum\limits_{\sigma\in\Sh(p,q)}\sum\limits_{i=1}^p 
			{\rm sgn}(\sigma)
			X(a_{\sigma(1)},\cdots, \piepartial_{Y(a_{\sigma(p+1)},\cdots,a_{\sigma(p+q)})}a_{\sigma(i)},
			\cdots, a_{\sigma(p)})\\
			&\quad -\sum\limits_{\tau\in\Sh(p,q)}\sum\limits_{j=1}^q {\rm sgn}(\tau)
			Y(a_{\tau(p+1)},\cdots,
			\piepartial_{X(a_{\tau(1)},\cdots,a_{\tau(p)})}a_{\tau(p+j)}, \cdots
			,a_{\tau(p+q)}
			)\\
			&\quad\quad + \sum \limits_{\alpha \in \Sh(p,q)} {\rm sgn}(\alpha)[X(a_{\alpha(1)},\cdots, a_{\alpha(p)}), Y(a_{\alpha(p+1)},\cdots,a_{\alpha(p+q)})]_B\,.
		\end{align*}
		\item[(2)]For all $X \in \Omega_A^p(B)$,  $Y \in \Omega_A^q(B)$, and $Z \in \Omega_A^r(B)$, the $3$-bracket $[X,Y,Z]_3\in \Omega_A^{p+q+r-1}(B)$ is determined by
		\begin{align*}
			&[X,Y,Z]_3 (a_1,\cdots,a_{p+q+r-1})\\
			&=\!(-1)^{p+q+1} \!\!\!\!\!\!\!\!\!\sum\limits_{\sigma \in\Sh(p,q,r-1)}   \!\!\!\!\!\!{\rm sgn}(\sigma)
			Z \Big(\beta \big(X(a_{\sigma(1)},\!\cdots),\!
			Y(a_{\sigma(p+1)},\!\cdots)\big),\!
			a_{\sigma(p+q+1)},\!\cdots \Big)\\
			 &\quad+\!(-1)^{p} \!\!\!\!\!\!\!\! \sum\limits_{\tau \in\Sh(p,q-1,r)}
			\!\!\!\!\!\! {\rm sgn}(\tau)
			Y\Big(\beta \big(X(a_{\tau(1)},\cdots),
			Z(a_{\tau(p+q)},\cdots)\big),
			a_{\tau(p+1)},\cdots\Big)
			\\
			&\quad-\sum \limits_{\alpha \in {\Sh(p-1,q,r)}}   
			{\rm sgn}(\alpha)
			X\Big(\beta \big(Y(a_{\alpha(p)},\cdots),
			Z(a_{\alpha(p+q)}, \cdots)\big),
			a_{\alpha(1)},\cdots\Big).
		\end{align*}
	\end{itemize}
	Here $a_1,\cdots, a_{p+q+r-1}\in\Gamma(A)$, and
	the set $\Sh(i,j,k)$ consists of $(i,j,k)$-shuffles.
\end{prop}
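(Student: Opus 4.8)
The plan is a \emph{generate-and-check} argument grounded in Theorem~\ref{Thm:Linftystructuregenerator}. Both sides of~(1) are $\K$-bilinear in $(X,Y)$ and both sides of~(2) are $\K$-trilinear in $(X,Y,Z)$, and every section of $\wedge^\bullet A^\vee\otimes B$ is a finite sum of decomposable sections $\lambda\otimes b$ with $\lambda\in\Omega^\bullet_A$ and $b\in\Gamma(B)=\Omega^0_A(B)$; hence it is enough to verify both formulas when $X=\lambda\otimes b$, $Y=\lambda'\otimes b'$, and (for~(2)) $Z=\lambda''\otimes b''$.

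First I would compute the left-hand sides on such decomposable inputs directly from the generating relations of Theorem~\ref{Thm:Linftystructuregenerator}. Using $[b,b']_2=[b,b']_B$, the graded skew-symmetry of $[\cdot,\cdot]_2$, the Leibniz relation of item~(5), and $\rho_1(\lambda\otimes b)\omega=\lambda\wedge(\piepartial_b\omega)$, a short computation yields the closed form
\begin{align*}
[\lambda\otimes b,\lambda'\otimes b']_2
&=(\lambda\wedge\piepartial_b\lambda')\otimes b'-(-1)^{|\lambda||\lambda'|}(\lambda'\wedge\piepartial_{b'}\lambda)\otimes b\\
&\quad+(\lambda\wedge\lambda')\otimes[b,b']_B .
\end{align*}
An analogous but longer reduction --- now using $[b,b',b'']_3=0$, the graded skew-symmetry of $[\cdot,\cdot,\cdot]_3$, the Leibniz relation of item~(6), and $\rho_2(\lambda\otimes b,\lambda'\otimes b')\omega=(-1)^{|\lambda|+|\lambda'|+1}(\lambda\wedge\lambda')\wedge(i_{\beta(b,b')}\omega)$ --- expresses $[\lambda\otimes b,\lambda'\otimes b',\lambda''\otimes b'']_3$ as a sum of three terms, one per argument: in the term singling out a given argument, that argument's form factor is replaced by its contraction $i_{\beta(\cdot,\cdot)}$ with $\beta$ applied to the $B$-parts of the other two, the remaining two form factors are wedged on, and the output $B$-coefficient is the $B$-part of the singled-out argument; the sign for the $\lambda''$-term is $(-1)^{|\lambda|+|\lambda'|+1}$, and the other two come out of the skew-symmetrisation.

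Next I would expand the right-hand sides of~(1) and~(2) on sections $a_1,\cdots\in\Gamma(A)$ for the decomposable inputs and match term by term. One uses the shuffle expansion of a wedge of forms, the identity $(\piepartial_c\nu)(a_1,\cdots,a_k)=\rho_L(c)\big(\nu(a_1,\cdots,a_k)\big)-\sum_i\nu(a_1,\cdots,\piepartial_c a_i,\cdots,a_k)$ on $\Omega^\bullet_A$ (a consequence of the defining pairing of $\piepartial_c$ on $\Gamma(A^\vee)$ and the Leibniz rule), and the fact that the forms $\lambda,\lambda',\lambda''$ are tensorial while $\piepartial_{(\cdot)}$ in its $B$-slot and $\beta$ are $C^\infty(M,\K)$-bilinear, so that the only anchor terms in the expansion of the right-hand side of~(1) come from the failure of $[\cdot,\cdot]_B$ to be $C^\infty(M,\K)$-bilinear, via $[fb,gb']_B=fg[b,b']_B+f(\rho_B(b)g)\,b'-g(\rho_B(b')f)\,b$. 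The main --- indeed the only --- obstacle is the resulting sign bookkeeping: one must check that the anchor terms $\rho_L(c)(\cdots)$ produced when $\piepartial_b\lambda'$ and $\piepartial_{b'}\lambda$ are expanded in the closed form of the previous step coincide, \emph{after} relabelling $(q,p)$-shuffles as $(p,q)$-shuffles, with the $\rho_B$-correction terms inside $[X(\cdots),Y(\cdots)]_B$ appearing in~(1); and that this same relabelling contributes exactly the sign $(-1)^{|\lambda||\lambda'|}$ needed to cancel the one in the closed form, so that the $\piepartial_{(\cdot)}a_i$-terms line up with the first two sums of~(1). The ternary case presents the same bookkeeping with an extra argument and the extra global sign $(-1)^{|\lambda|+|\lambda'|+1}$ from $\rho_2$; there, the three shuffle sets $\Sh(p,q,r-1)$, $\Sh(p,q-1,r)$, $\Sh(p-1,q,r)$ of~(2) are precisely the ones in which the singled-out argument --- whose form factor gets contracted --- is $Z$, $Y$, $X$ respectively.
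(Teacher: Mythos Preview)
Your proposal is correct and follows essentially the same approach as the paper: reduce by $\K$-linearity to decomposable inputs, compute the closed form of the brackets from the generating relations of Theorem~\ref{Thm:Linftystructuregenerator}, and then evaluate on sections $a_1,\cdots\in\Gamma(A)$ using the shuffle expansion of wedge products together with the derivation identity for $\piepartial_b$ on $\Omega^\bullet_A$. The paper carries out the term-by-term matching for~(1) explicitly and simply states~(2) after writing down the analogous closed form for $[X,Y,Z]_3$; your discussion of the sign bookkeeping (in particular, the matching of the $\rho_B$-terms from $[\cdot,\cdot]_B$ with the anchor terms from $\piepartial_{(\cdot)}$) makes explicit precisely what the paper's displayed computation verifies.
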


\begin{proof}
	To show (1), by linearity, we assume that $X=u\otimes b\in \Omega_A^p(B)$ 
	and $Y=v\otimes c\in \Omega_A^q(B)$ where $u,v\in \Omega_A^{\bullet}$, $b,c\in \Gamma(B)$.
	By the generating relations of the binary bracket, we have
	$$[X,Y]_2=\big(u\wedge(\piepartial_b v)\big)\otimes c-\big((\piepartial_c u)\wedge v\big)\otimes b
	+(u\wedge v)\otimes [b,c]_B.$$
	Evaluating the above expression at the variables $a_1,\cdots, a_{p+q}$, we get
	\begin{align*}
		&[X,Y]_2(a_1,\cdots, a_{p+q})\\
		&\quad=\sum\limits_{\tau\in\Sh(p,q)} {\rm sgn}(\tau)
		u(a_{\tau(1)},\cdots,a_{\tau(p)})\cdot
		\piepartial_b v(a_{\tau(p+1)},\cdots,a_{\tau(p+q)})\cdot c\\
		&\qquad-\sum\limits_{\sigma\in\Sh(p,q)} {\rm sgn}(\sigma)
		\piepartial_c u(a_{\sigma(1)},\cdots,a_{\sigma(p)}) \cdot  v(a_{\sigma(p+1)},\cdots,a_{\sigma(p+q)}) \cdot b\\
		&\qquad+\sum\limits_{\alpha \in \Sh(p,q)} {\rm sgn}(\alpha) u(a_{\alpha(1)},\cdots, a_{\alpha(p)}) \cdot v(a_{\alpha(p+1)},\cdots,a_{\alpha(p+q)})\cdot [b,c]_B\\
		&\quad=\sum\limits_{\tau\in\Sh(p,q)} {\rm sgn}(\tau)
		u(a_{\tau(1)},\cdots,a_{\tau(p)})\cdot
		\rho_B(b)\big( v(a_{\tau(p+1)},\cdots,a_{\tau(p+q)})\big)\cdot c\\
		&\qquad-\sum\limits_{\tau\in{\Sh(p,q)}} {\rm sgn}(\tau)
		u(a_{\tau(1)},\cdots,a_{\tau(p)})\\&\qquad\qquad\qquad\qquad\qquad\qquad\cdot
		\big(\sum\limits_{j=1}^q v(a_{\tau(p+1)},\cdots,\piepartial_b a_{\tau(p+j)},\cdots, a_{\tau(p+q)})\big)\cdot c\\
		&\qquad-\sum\limits_{\sigma\in\Sh(p,q)} {\rm sgn}(\sigma)
		\rho_B(c)\big(u(a_{\sigma(1)},\cdots,a_{\sigma(p)})\big) \cdot  v(a_{\sigma(p+1)},\cdots,a_{\sigma(p+q)}) \cdot b\\
		&\qquad+\sum\limits_{\sigma\in\Sh(p,q)} {\rm sgn}(\sigma)
		\big(\sum\limits_{i=1}^p u(a_{\sigma(1)},\cdots,\piepartial_c a_{\sigma(i)},\cdots,a_{\sigma(p)})\big)\\
		&\quad\quad\quad\qquad\qquad\qquad\qquad\qquad\qquad \cdot  v(a_{\sigma(p+1)},\cdots,a_{\sigma(p+q)}) \cdot b\\
		&\qquad+\sum\limits_{\alpha \in \Sh(p,q)} {\rm sgn}(\alpha) u(a_{\alpha(1)},\cdots, a_{\alpha(p)}) \cdot v(a_{\alpha(p+1)},\cdots,a_{\alpha(p+q)})\cdot [b,c]_B\\
		&\quad=\sum \limits_{\sigma \in \Sh(p,q)}\sum\limits_{i=1}^p {\rm sgn}(\sigma)
		X(a_{\sigma(1)},\cdots, \piepartial_{Y(a_{\sigma(p+1)},\cdots,a_{\sigma(p+q)})}a_{\sigma(i)},
		\cdots, a_{\sigma(p)})\\
		&\qquad-\sum \limits_{\tau \in \Sh(p,q)}\sum\limits_{j=1}^q {\rm sgn}(\tau)
		Y(a_{\tau(p+1)},\cdots,
		\piepartial_{X(a_{\tau(1)},\cdots,a_{\tau(p)})}a_{\tau(p+j)}, \cdots
		,a_{\tau(p+q)}
		)\\
		&\qquad+ \sum \limits_{\alpha \in \Sh(p,q)} {\rm sgn}(\alpha)[X(a_{\alpha(1)},\cdots, a_{\alpha(p)}), Y(a_{\alpha(p+1)},\cdots,a_{\alpha(p+q)})]_B\,.
	\end{align*}
	
	To show (2),  we also assume that $Z=w\otimes e\in \Omega_A^r(B)$ where $w\in \Omega_A^{\bullet}$, $e\in \Gamma(B)$.
	By the generating relations of the ternary bracket, we have
	\begin{align*}
		[X,Y,Z]_3
		&=\big(\rho_2(X,Y) w\big)\otimes e 
		+ (-1)^{|Y||Z|+1} \big(\rho_2(X,Z) v\big)\otimes c\\
		&\qquad
		+ (-1)^{|X|(|Y|+|Z|)} \big(\rho_2(Y,Z) u\big)\otimes b\\
		&=(-1)^{|X|+|Y|+1}(u\wedge v\wedge i_{\beta(b,c)}w) \otimes e
		+(-1)^{|X|}(u\wedge i_{\beta(b,e)}v\wedge w) \otimes c\\
		&\qquad
		- (i_{\beta(c,e)}u\wedge v\wedge w) \otimes b.
	\end{align*}
	Evaluating the above expression at the variables $a_1,\cdots, a_{p+q}$, we get (2).
\end{proof}

\begin{Rem}
	The unary anchor $\rho_1$, the binary anchor $\rho_2$ and the ternary bracket $[\cdot,\cdot,\cdot]_3$ are all $C^\infty(M)$-(multi-)linear, whereas the zeroth anchor $d_A$, the unary bracket $\dBott$ and the binary bracket $[\cdot,\cdot]_2$ are not.
\end{Rem}

\begin{Rem}\label{Rmk:depedenceofsplittings}Note that the $\Linftythree$ structure maps depend on the choice of a splitting $L\cong A\oplus B$. However, different choices of splittings give rise to
	isomorphic $\Linftythree$ algebras where the isomorphism is given by a collection
	of multilinear maps
	$$\varphi_n:~\wedge^n \big(\Omega^\bullet_A(B)\big)\to \Omega^\bullet_A(B),\quad n=1,2,\ldots$$
	where $\varphi_1$ is the identity map, 	see \cite[Theorem 1.1]{BCSX}. In fact, these maps $\varphi_n$ can be wrapped up into an automorphism $\exp(\delta_\pi)$ of the coalgebra ${S\big(\Omega^\bullet_A(B)[1]\big)} $ where the datum $\pi$ measures the difference between two splittings and  $\delta_\pi$ is a coderivation of $ {S\big(\Omega^\bullet_A(B)[1]\big)}$.  Moreover, $\exp(\delta_\pi)$ is an isomorphism of dg coalgebras, i.e., it  intertwines  the relevant codifferentials arising from different splittings.
\end{Rem}

\section{Lie algebra actions on $L_\infty$ algebras}\label{Sec:LiealgebraactiononLinfty}
Let $\h$ be an  ordinary (i.e., with zero grading) Lie algebra,   
and $\g$ be an $L_\infty$ algebra.
In this Section, we consider a notion of Lie algebra
action of $\h$ on the $L_\infty$ algebra $\g$,
together with several equivalent descriptions of such an action.

\subsection{Definition of Lie algebra actions on $L_\infty$ algebras}
Recall that from $\g$ we obtain the associated dg coalgebras 
$({S(\g[1])},Q)$ and $(\overline{S(\g[1])},Q)$, and the dglas  
${\rm Coder}(\overline{S(\g[1])})$ and ${\rm Coder}({S(\g[1])})$ whose  
differentials are both induced by $Q$. In the meantime, $\h$ could be regarded  
as a dgla  which concentrates in degree $0$ and has a trivial differential.
\begin{Def}\label{Def: haction} A \textbf{Lie algebra action} of $\h$ on the $L_\infty$ algebra $\g$ is a ~dgla ~ homomorphism
	$$\psi:~\h\to {\rm Coder}({S(\g[1])}).$$
\end{Def}
\begin{Def}\label{Def: hactioncompatible}
	Suppose that two $L_\infty$ algebras $\g_1$ and $\g_2 $ are both equipped with $\h$-actions $\psi_1$ and $\psi_2$, respectively. 
	A morphism of $L_\infty$ algebras $F:~\g_1 \to \g_2 $ is called \textbf{compatible with the $\h$-actions} if the corresponding morphism of dg coalgebras $F^s:~  {S(\g_1 [1])} \to   {S(\g_2 [1])} $ intertwines with $\psi_1$ and $\psi_2$, i.e. $\psi_2(h)\circ F^s=F^s\circ \psi_1(h)$, $\forall h\in \h$.  
	
\end{Def}

\begin{prop}\label{Prop:equi-action1}
	An $\h$-action on $\g$ as defined above is equivalent to
	a pair of linear maps $(\theta,\gamma)$
	where  $\theta:~\h\to {\rm Coder}^0(\overline{S(\g[1])})$
	and $\gamma:~\h\to\g[1]^0$
	%\begin{itemize}
	%	\item $\theta:~\h\to {\rm Coder}^0(\overline{S(\g[1])})$ 
	%	is a morphism of  vector spaces;
	%	\item $\gamma:~\h\to\g[1]^0$ is a morphism of vector spaces
	%\end{itemize}	
	such that for all $  h,h'\in\h$ the following compatibility conditions hold:
	\begin{eqnarray}
		\label{Eqt:Q-kappa0}
		Q\circ \gamma&=&0  ,\\
		\label{Eqt:Q-kappa}
		[Q,\theta(h)] &=&-\gamma(h)\lrcorner Q ,\\
		\label{Eqt:Q-theta0}
		\gamma\big([h,h']_\h\big)&=&\theta(h)\big(\gamma(h')\big)-\theta(h')\big(\gamma(h)\big),\\
		\label{Eqt:Q-theta}
		\theta\big([h,h']_\h\big)&=&[\theta(h),\theta(h')]+\gamma(h')\lrcorner\theta(h)-\gamma(h)\lrcorner\theta(h').
	\end{eqnarray}
\end{prop}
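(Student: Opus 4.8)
The plan is to unpack Definition~\ref{Def: haction} by using the structural facts recorded in the coalgebra subsection, especially the identification ${\rm Coder}(S(\g[1]))\cong\Hom(S(\g[1]),\g[1])$ and its reduced analogue, together with the splitting of a coderivation $D=(D_0,D_1,D_2,\dots)$ on $S(\g[1])$ into the "constant part" $D_0\in\g[1]$ and the reduced part $\overline{D}=(D_1,D_2,\dots)\in{\rm Coder}(\overline{S(\g[1])})$. Concretely, for $h\in\h$ I set $\gamma(h):=\psi(h)_0(1)\in\g[1]^0$ (the degree is $0$ because $\psi(h)$ has degree $0$ as a coderivation, $\h$ sitting in degree $0$) and $\theta(h):=\overline{\psi(h)}\in{\rm Coder}^0(\overline{S(\g[1])})$. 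Using formula \eqref{eq:lrcorner}, namely $-[v^\#,R]=((-1)^{ij}R(v))^\#+v\lrcorner R$, I can translate any statement about brackets of coderivations on $S(\g[1])$ involving a "constant" coderivation $v^\#$ into statements about $\theta$ and $\gamma$ on $\overline{S(\g[1])}$. This dictionary is the technical heart of the equivalence.

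The forward direction: assume $\psi$ is a dgla homomorphism $\h\to{\rm Coder}(S(\g[1]))$, where the differential on the target is $[Q,\cdot]$ (the codifferential $Q$ on $\overline{S(\g[1])}$ induces one on $S(\g[1])$ as noted in the excerpt, via $Q_0=0$). Being a dgla morphism means (i) $\psi$ commutes with differentials: $[Q,\psi(h)]=\psi(0)=0$ for all $h$ (since $\h$ has trivial differential), and (ii) $\psi([h,h']_\h)=[\psi(h),\psi(h')]$. I would project condition (i) onto its "constant" component and its "reduced" component. The constant component $[Q,\psi(h)]_0(1)=0$; writing $\psi(h)=\gamma(h)^\#+ (\text{lift of }\theta(h))$ and applying \eqref{eq:lrcorner} with $v=\gamma(h)$ (noting $i=0$, so $(-1)^{ij}=1$) gives $[Q,\gamma(h)^\#]= -(Q(\gamma(h)))^\# - \gamma(h)\lrcorner Q$; matching the $\#$-parts forces $Q\circ\gamma(h)=0$, which is \eqref{Eqt:Q-kappa0}, and matching the reduced parts of $0=[Q,\psi(h)]$ gives $[Q,\theta(h)]=-\gamma(h)\lrcorner Q$, which is \eqref{Eqt:Q-kappa}. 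Similarly, projecting the bracket condition (ii) onto constant and reduced parts, and again applying \eqref{eq:lrcorner} to expand $[\psi(h),\psi(h')]=[\gamma(h)^\#+\cdots,\gamma(h')^\#+\cdots]$, yields \eqref{Eqt:Q-theta0} (from the $\#$-part: $\gamma([h,h']_\h)=\theta(h)(\gamma(h'))-\theta(h')(\gamma(h))$, using that $[\gamma(h)^\#,\gamma(h')^\#]=0$ since both are "constant" and symmetric-coalgebra brackets of such vanish) and \eqref{Eqt:Q-theta} (from the reduced part, where the cross terms $[\gamma(h')^\#,\theta(h)]$ and $[\gamma(h)^\#,\theta(h')]$ produce the $\lrcorner$-terms via \eqref{eq:lrcorner}). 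The backward direction simply reverses this: given $(\theta,\gamma)$ satisfying \eqref{Eqt:Q-kappa0}--\eqref{Eqt:Q-theta}, define $\psi(h)$ to be the unique coderivation of $S(\g[1])$ with $\psi(h)_0(1)=\gamma(h)$ and $\overline{\psi(h)}=\theta(h)$, i.e. $\psi(h)=\gamma(h)^\#+(\text{lift of }\theta(h))$, and check linearity (clear), that it lands in degree $0$, and that the four identities reassemble into $[Q,\psi(h)]=0$ and $\psi([h,h']_\h)=[\psi(h),\psi(h')]$ after applying \eqref{eq:lrcorner} in reverse.

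I expect the main obstacle to be bookkeeping rather than conceptual: one must be careful that the bracket $[\cdot,\cdot]$ on ${\rm Coder}(S(\g[1]))$, when restricted to the image of $\#:\g[1]\to{\rm Coder}(S(\g[1]))$, genuinely vanishes (so that $[\gamma(h)^\#,\gamma(h')^\#]=0$) — this follows because $v^\#$ has all components zero except $(v^\#)_0=v$, and the Gerstenhaber-type bracket of two such coderivations only involves compositions $D\circ D'$ which, on $S(\g[1])$, insert $v$ and then $v'$, but the symmetric-coalgebra comultiplication makes $v^\#\circ v'^\#$ symmetric in $v,v'$, so the graded commutator cancels (both $v,v'$ have even degree $0$, so no sign subtlety). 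A second point requiring care is the precise sign and placement of the contraction operator $\lrcorner$: the excerpt defines $(v\lrcorner R)_n(w_1\odot\cdots\odot w_n)=(-1)^{ij}R_{n+1}(v\odot w_1\odot\cdots)$ and records $v\lrcorner R=-\overline{[v^\#,R]}$, so I must consistently use this identity (with the correct $i=\degree{R}$, $j=\degree{v}$) when passing between the two pictures; since here $\degree{\gamma(h)}=0$ and $\degree{Q}=1$, $\degree{\theta(h)}=0$, all the signs $(-1)^{ij}$ are $+1$, which is a pleasant simplification but one I should state explicitly rather than silently assume. Finally I would remark that \eqref{Eqt:Q-kappa0} and \eqref{Eqt:Q-kappa} together say precisely that $\gamma(h)$ is a Maurer--Cartan-type element controlling the failure of $\theta(h)$ to commute with $Q$, which foreshadows the gauge-theoretic interpretation in the later sections.
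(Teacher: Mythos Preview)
Your proposal is correct and follows essentially the same route as the paper: decompose $\psi(h)=\gamma(h)^\#+\theta(h)$ via the identifications \eqref{ISO:Fukaya}--\eqref{ISO:reduced}, then use \eqref{eq:lrcorner} together with $[v^\#,v'^\#]=0$ to split each of the two dgla-morphism conditions $[Q,\psi(h)]=0$ and $\psi([h,h']_\h)=[\psi(h),\psi(h')]$ into its constant ($\#$) and reduced parts, yielding exactly the four displayed equations. The paper's proof is the same computation, written out explicitly; your additional remarks on the signs (all $(-1)^{ij}=+1$ here) and on why $[\gamma(h)^\#,\gamma(h')^\#]=0$ are accurate and worth stating.
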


\begin{proof}
	For every $h\in\h$, its corresponding coderivation $\psi(h)\in {\rm Coder}({S(\g[1])})$, by identification \eqref{ISO:Fukaya}, is completely determined by a sequence of maps 
	$$\psi(h)_k:~S^k(\g[1])\to \g[1],\quad k\geqslant0.$$
	Let $\gamma:~\h\to\g[1]^0$ be given by $\gamma(h):=\psi(h)_0(1)$. 
	According to identification \eqref{ISO:reduced}, the sequence of maps $\psi(h)_k$, $k\geqslant 1$, uniquely 
	determines a coderivation of degree $0$ on $\overline{S(\g[1])}$, which we denote by $\theta(h)$. Hence,
	\begin{equation}\label{eq:decomposition}
		\psi(h)=\gamma(h)^\#+\theta(h).
	\end{equation}
	
	The linear map $\psi$ is a morphism of dglas  if and only if both of the following two equalities hold:
	\begin{align}
		[Q,\psi(h)]&=0,\quad \forall h\in\h, \label{Eqt:actionstrcture1} \\
		\psi([h,h']_\h)&=[\psi(h),\psi(h')],\quad \forall h,h'\in\h. \label{Eqt:actionstrcture2}
	\end{align}
	
	Using \eqref{eq:decomposition} and \eqref{eq:lrcorner}, we have 
	\begin{align*}
		[Q,\psi(h)] 
		&= [Q,\gamma(h)^\#] +[Q,\theta(h)] = -[\gamma(h)^\#,Q]+[Q,\theta(h)]\\
		&= Q\big(\gamma(h)\big)^\#+\gamma(h)\lrcorner Q+[Q,\theta(h)],
	\end{align*}
	where $\gamma(h)\lrcorner Q+[Q,\theta(h)]\in {\rm Coder}^0(\overline{S(\g[1])})$. Now, we see that \eqref{Eqt:actionstrcture1} is equivalent to \eqref{Eqt:Q-kappa0} and \eqref{Eqt:Q-kappa}.
	
	Using \eqref{eq:decomposition} and \eqref{eq:lrcorner} again, plus the fact $[v^\#,v'^\#]=0,\forall v,v'\in \g[1]$, we have
	\begin{align*}
		&\psi\big([h,h']_\h\big)-[\psi(h),\psi(h')]\\
		=&\gamma\big([h,h']_\h\big)^\#+\theta\big([h,h']_\h\big)
		-[\gamma(h)^\#+\theta(h),\gamma(h')^\#+\theta(h')]\\
		=&\gamma\big([h,h']_\h\big)^\#+\theta\big([h,h']_\h\big)
		-[\theta(h),\theta(h')]
		-[\gamma(h)^\#,\theta(h')]
		-[\theta(h),\gamma(h')^\#]\\
		=&\gamma\big([h,h']_\h\big)^\#+\theta\big([h,h']_\h\big)
		-[\theta(h),\theta(h')]\\
		&+\big(\theta(h')(\gamma(h))\big)^\#+\gamma(h)\lrcorner\theta(h')
		-\big(\theta(h)(\gamma(h'))\big)^\#-\gamma(h')\lrcorner\theta(h).
	\end{align*}
	Separating the coderivations in the above expression involving $\#$ apart from those in ${\rm Coder}^0(\overline{S(\g[1])})$, we see that \eqref{Eqt:actionstrcture2} is equivalent to \eqref{Eqt:Q-theta0} and \eqref{Eqt:Q-theta}.
\end{proof}

\begin{Rem}
	In particular, if $\gamma=0$, then $\theta:~\h\to {\rm Coder}(\overline{S(\g[1])})$ needs to be a morphism of dglas.
	The compatibility conditions in Proposition \ref{Prop:equi-action1} become
	\begin{eqnarray*}
		[Q,\theta(h)] &=&0 ,\\
		\theta([h,h']_\h)&=&[\theta(h),\theta(h')],
	\end{eqnarray*}
	for all $h,h'\in\h$.
	Therefore, for any $h\in\h$ the coderivation $\theta(h)$ is an infinitesimal deformation of $\g$, and $\theta$ commutes with Lie brackets.
	In this case, we say $\g$ admits a \textbf{strict Lie algebra action} of $\h$.
\end{Rem}

\subsection{Equivalent characterizations of Lie algebra actions on $L_\infty$ algebras}
We give two more characterizations of Definition~\ref{Def: haction}. The first one follows Mehta-Zambon's approach \cite{MehtaZambon} via extensions. The second one follows the classical approach of specifying the action maps.

\begin{Thm}\label{Thm:hplusg}
	An $L_\infty$ algebra $\g$ admits a Lie algebra action of $\h$
	if and only if the direct sum $\h\oplus \g$, 
	where $\h$ concentrates in degree $0$, 
	admits an $L_\infty$ algebra structure which extends the original  $L_\infty$ algebra structure on $\g$ and the Lie algebra structure on $\h$,
	in the sense that the following two conditions hold:
	\begin{itemize}
		\item[(1)] 
		%	$\g$ is an ideal of $\h\oplus\g$, 
		%	i.e. any bracket  with an input of $\g$ takes values in $\g$; 
		the sequence 
		$0\longrightarrow \g \overset{\iota_\g}{\longrightarrow} \h\oplus \g \overset{\rm{pr}_\h}{\longrightarrow} \h \longrightarrow 0$
		is a (not necessarily split) sequence of $L_\infty$ morphisms, 
		\item [(2)] any $n$-bracket on $\h\oplus \g$ for
		$n\geqslant  3$ vanishes when two or more inputs are from $\h$. 
	\end{itemize}
\end{Thm}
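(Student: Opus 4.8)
The statement is an ``if and only if,'' so the plan is to translate both sides into the language of codifferentials on symmetric coalgebras and exhibit a dictionary. Recall that an $L_\infty$ structure on $\h\oplus\g$ is the same as a codifferential $\hat Q$ on $S((\h\oplus\g)[1])\cong S(\h[1]\oplus\g[1])$, while the given data on $\g$ corresponds to the codifferential $Q$ on $\overline{S(\g[1])}$ (equivalently on $S(\g[1])$ with $Q_0=0$), and the Lie bracket on $\h$ corresponds to its Chevalley--Eilenberg codifferential $Q_\h$ on $S(\h[1])$, which has only a $2$-component. The key observation is that the two structural conditions (1) and (2) pin down $\hat Q$ so tightly that its only ``free'' pieces are exactly a degree-$0$ map $\theta\colon\h\to{\rm Coder}^0(\overline{S(\g[1])})$ and a degree-$0$ map $\gamma\colon\h\to\g[1]^0$, and then $\hat Q^2=0$ unwinds precisely into the four compatibility equations \eqref{Eqt:Q-kappa0}--\eqref{Eqt:Q-theta} of Proposition~\ref{Prop:equi-action1}. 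Combining this with Proposition~\ref{Prop:equi-action1}, which already equates $(\theta,\gamma)$-data with an $\h$-action in the sense of Definition~\ref{Def: haction}, will finish the proof.

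First I would set up the bigrading. Decompose $\hat Q=\sum_{k\geqslant 0}\hat Q_k$ by corestriction, and further split each $\hat Q_k\colon S^k(\h[1]\oplus\g[1])\to\h[1]\oplus\g[1]$ according to how many of the $k$ inputs lie in $\h[1]$ and whether the output lies in $\h[1]$ or $\g[1]$. Condition (1) says that $\iota_\g$ and ${\rm pr}_\h$ are $L_\infty$ morphisms: the first forces the ``all inputs in $\g$'' part of $\hat Q$ to be exactly $Q$, with no $\h$-component in the output when all inputs come from $\g$; the second (a strict surjection) forces the component of $\hat Q$ with output in $\h[1]$ to factor through ${\rm pr}_\h$ on inputs, hence to be the pushforward of $Q_\h$ — in particular it has only a binary term and involves no $\g$-inputs. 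Condition (2) kills every component with $\geqslant 2$ inputs from $\h$ except possibly in arity $2$ (which is the $Q_\h$ term just accounted for). What survives is: (a) the $Q_\h$-term; (b) the $\g$-internal $Q$; (c) terms with output in $\g[1]$ and exactly one input in $\h[1]$ and $k-1\geqslant 0$ inputs in $\g[1]$. By the coalgebra identification \eqref{ISO:Fukaya}, the arity-$(\geqslant 2)$ part of (c) assembles into a linear map $\theta\colon\h\to{\rm Coder}^0(\overline{S(\g[1])})$ (degree $0$ because $\h$ sits in degree $0$ so $\tilde h$ has degree $-1$, matching the degree-$1$ output), and the arity-$1$ part of (c) is a linear map $\gamma\colon\h\to\g[1]^0$ with $\gamma(h)=\hat Q_1(\tilde h)$.

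Then I would expand $\hat Q^2=0$ and read off components. The purely-$\h$ part reproduces $Q_\h^2=0$, automatic. The purely-$\g$ part is $Q^2=0$, given. The mixed components — one $\h$-input, several $\g$-inputs, output in $\g[1]$ — split by arity into exactly the four relations: arity $1$ gives $Q\circ\gamma=0$, i.e.\ \eqref{Eqt:Q-kappa0}; the next arity gives $[Q,\theta(h)]=-\gamma(h)\lrcorner Q$, i.e.\ \eqref{Eqt:Q-kappa}, using \eqref{eq:lrcorner} to handle the $\gamma(h)^\#$ contribution; and the two-$\h$-input components (allowed only because the $Q_\h$-term has two $\h$-inputs, so $\hat Q^2$ can pair a $Q_\h$-output with a $\theta$ or $\gamma$) give \eqref{Eqt:Q-theta0} in arity $1$ and \eqref{Eqt:Q-theta} in higher arity. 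Conversely, given $(\theta,\gamma)$ satisfying \eqref{Eqt:Q-kappa0}--\eqref{Eqt:Q-theta}, I would \emph{define} $\hat Q$ by the three families (a),(b),(c) above — this is the content of saying $\psi(h)=\gamma(h)^\#+\theta(h)$ from Proposition~\ref{Prop:equi-action1} extended by $Q_\h$ on $\h$ — and the same expansion shows $\hat Q^2=0$, while conditions (1) and (2) hold by construction. An application of Proposition~\ref{Prop:equi-action1} identifies this $(\theta,\gamma)$-data with an honest $\h$-action on $\g$, closing the equivalence.

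The main obstacle I anticipate is purely bookkeeping: verifying that condition (2) is exactly what is needed so that no \emph{new} structure beyond $Q_\h,Q,\theta,\gamma$ can appear, and — in the other direction — that $\hat Q^2=0$ does not force extra relations coming from arity-$2$ $\h$-input terms interacting with $Q_\h$. Concretely one must check that the ``$\gamma(h)\lrcorner\theta(h')$''-type terms in \eqref{Eqt:Q-theta0}--\eqref{Eqt:Q-theta} are precisely the terms produced when $\hat Q^2$ feeds a $\gamma$-output back into a $\theta$ (or into the second slot of $Q_\h$ via the decalage signs), and that the Koszul signs match. I expect this to be routine given the sign relation \eqref{eq:lrcorner} and the computations already carried out in the proof of Proposition~\ref{Prop:equi-action1}, which essentially did the arity-by-arity bookkeeping for $[Q,\psi(h)]=0$ and $\psi([h,h']_\h)=[\psi(h),\psi(h')]$; here one simply repackages those same identities as components of a single codifferential $\hat Q$ on the extended space.
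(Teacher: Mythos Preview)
Your proposal is correct and follows essentially the same approach as the paper: both translate the $L_\infty$ structure on $\h\oplus\g$ into a codifferential $\widehat Q$ on $\overline{S(\h[1]\oplus\g[1])}$, decompose it according to the number of $\h[1]$-inputs, use conditions (1) and (2) to pin down all pieces except the $(\theta,\gamma)$-data, and then read off the four compatibility relations \eqref{Eqt:Q-kappa0}--\eqref{Eqt:Q-theta} from $\widehat Q^2=0$ case by case. The paper's proof is simply a more explicit version of your outline, writing out each component $\widehat Q_n$ and each case of the $\widehat Q\circ\widehat Q=0$ verification (zero, one, two, and three-or-more $\h$-inputs) in full detail.
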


\begin{proof}
	Assume that $\g$ admits an $\h$-action with the dgla  homomorphism $\psi$
	as in Definition \ref{Def: haction}, it suffices to construct a codifferential $\widehat{Q}$ on $\overline{S(\h[1]\oplus\g[1])}$.
	By Proposition \ref{Prop:equi-action1}, $\psi$ is completely determined by $(\gamma,\theta)$.
	In what follows, we explain by several steps how to use $(Q,\theta,\gamma)$ to construct $\widehat{Q}$.
	
	Firstly, we specify the components of a degree $1$
	coderivation $\widehat{Q}$  $=$ $(\widehat{Q}_1,$ $\widehat{Q}_2,\cdots)$
	of $\overline{S(\h[1]\oplus\g[1])}$. 
	\begin{itemize}
		\item
		The unary component
		$\widehat{Q}_1:~\h[1]\oplus\g[1]\to\g[1]\subset\h[1]\oplus\g[1]$ is given by
		$$\widehat{Q}_1(\tilde{h})=\gamma(h)\quad\text{and}\quad \widehat{Q}_1(\tilde{x})=Q_1(\tilde{x}), \qquad \forall h\in\h, x\in\g.$$
		\item
		The binary component
		$\widehat{Q}_2:~S^2(\h[1]\oplus\g[1])\to\h[1]\oplus\g[1]$ is given by
		\begin{eqnarray*}
			\widehat{Q}_2(\tilde{x}_1\odot \tilde{x}_2) &=& Q_2(\tilde{x}_1\odot \tilde{x}_2),\\
			\widehat{Q}_2(\tilde{h}\odot \tilde{x}) &=& \theta(h)_1(\tilde{x}), 
			\\
			\widehat{Q}_2(\tilde{h}\odot \tilde{h'}) &=& [h,h']_\h [1],
		\end{eqnarray*}
		where $h,h'\in\h$ and $x,x_1,x_2\in\g$.
		\item
		The $n$-component $(n\geqslant3)$ $\widehat{Q}_n:~S^n(\h[1]\oplus\g[1])\to\g[1]\subset\h[1]\oplus\g[1]$ is determined by
		\begin{eqnarray*}
			\widehat{Q}_n(\tilde{x}_1\odot\cdots\odot \tilde{x}_n)&=&Q_n(\tilde{x}_1\odot\cdots\odot \tilde{x}_n),\\
			\widehat{Q}_n(\tilde{h}\odot \tilde{x}_1\odot\cdots\odot \tilde{x}_{n-1})&=&\theta(h)_{n-1}(\tilde{x}_1\odot\cdots\odot \tilde{x}_{n-1}),
		\end{eqnarray*}
		where $h\in\h,\,x_1,\cdots,x_n\in\g$, and $\widehat{Q}_n$ is required to vanish with two or more inputs of $\h[1]$.
	\end{itemize}
	
	Next, we verify that $\widehat{Q}\circ \widehat{Q}=0$.
	Applying Equation \eqref{Eqt:Q-kappa0} and $({Q}\circ{Q})_1=0$,
	one has $(\widehat{Q}\circ\widehat{Q})_1=0$.
	Applying Equations \eqref{Eqt:Q-kappa},\,\eqref{Eqt:Q-theta0}
	and $({Q}\circ{Q})_2=0$, one has $(\widehat{Q}\circ\widehat{Q})_2=0$.
	We now apply Equations \eqref{Eqt:Q-kappa} and \eqref{Eqt:Q-theta} to obtain $(\widehat{Q}\circ\widehat{Q})_{n+1}=0$ for every $n\geqslant 2$. 
	\begin{itemize}
		\item 
		When there is only one input of $\h[1]$, i.e.,
		$\forall\tilde{h}\in\h[1]$ and $\tilde{x}_1,\cdots,\tilde{x}_n\in\g[1]$, we have
		\begin{align*}
			&(\widehat{Q}\circ\widehat{Q})_{n+1}(\tilde{h}\odot \tilde{x}_1\odot\cdots\odot \tilde{x}_n)\\
			&\quad=Q_{n+1}(\gamma(h)\odot \tilde{x}_1\odot\cdots\odot \tilde{x}_n)\\
			&\qquad+\sum\limits_{{\begin{subarray}{c}
						k=1,\cdots,n \\
						\sigma \in \Sh(k,n-k)\end{subarray}}}
			\epsilon(\sigma;\tilde{x}_1,\cdots,\tilde{x}_n)\\
			&\qquad\qquad Q_{n-k+1}
			\Big(
			\theta(h)_k(\tilde{x}_{\sigma(1)}\odot\cdots\odot \tilde{x}_{\sigma(k)})\odot \tilde{x}_{\sigma(k+1)}\odot\cdots\odot \tilde{x}_{\sigma(n)}
			\Big)
			\\
			&\qquad-\sum\limits_{{\begin{subarray}{c}
						k=1,\cdots,n \\
						\sigma \in \Sh(k,n-k)\end{subarray}}}
			\epsilon(\sigma;\tilde{x}_1,\cdots,\tilde{x}_n)\\
			&\qquad\qquad\theta(h)_{n-k+1}
			\Big(
			Q_k(\tilde{x}_{\sigma(1)}\odot\cdots\odot \tilde{x}_{\sigma(k)})\odot \tilde{x}_{\sigma(k+1)}\odot\cdots\odot \tilde{x}_{\sigma(n)}
			\Big)
			\\
			&\quad=\big(\gamma(h)\lrcorner Q+[Q,\theta(h)]\big)_n(\tilde{x}_1\odot\cdots\odot \tilde{x}_n)\\
			&\quad=0.
		\end{align*}
		\item
		When there are two inputs of $\h[1]$, i.e.,
		$\forall\tilde{h},\tilde{h'}\in\h[1]$ and $\tilde{x}_1,\cdots,\tilde{x}_{n-1}\in\g[1]$, we have
		\begin{align*}
			&(\widehat{Q}\circ\widehat{Q})_{n+1}(\tilde{h}\odot \tilde{h'}\odot \tilde{x}_1\odot\cdots\odot \tilde{x}_{n-1})\\
			&\quad=\theta(h')_{n}\big(\gamma(h)\odot \tilde{x}_1\odot\cdots\odot \tilde{x}_{n-1}\big)
			-\theta(h)_{n}\big(\gamma(h')\odot \tilde{x}_1\odot\cdots\odot \tilde{x}_{n-1}\big)\\
			&\qquad+\theta\big([h,h']_\h\big)_{n-1}(\tilde{x}_1\odot\cdots\odot \tilde{x}_{n-1})\\
			&\qquad+\sum\limits_{{\begin{subarray}{c}
						k=1,\cdots,n-1 \\
						\sigma \in \Sh(k,n-k-1)\end{subarray}}}\epsilon(\sigma;\tilde{x}_1,\cdots,\tilde{x}_{n-1})\\
			&\qquad\qquad\theta(h')_{n-k}\big(\theta(h)_k(\tilde{x}_{\sigma(1)}\odot\cdots\odot \tilde{x}_{\sigma(k)})\odot \tilde{x}_{\sigma(k+1)}\odot\cdots\odot \tilde{x}_{\sigma(n-1)}\big)\\
			&\qquad-\sum\limits_{{\begin{subarray}{c}
						k=1,\cdots,n-1 \\
						\sigma \in \Sh(k,n-k-1)\end{subarray}}}\epsilon(\sigma;\tilde{x}_1,\cdots,\tilde{x}_{n-1})\\
			&\qquad\qquad\theta(h)_{n-k}\big(\theta(h')_k(\tilde{x}_{\sigma(1)}\odot\cdots\odot \tilde{x}_{\sigma(k)})\odot \tilde{x}_{\sigma(k+1)}\odot\cdots\odot \tilde{x}_{\sigma(n-1)}\big)\\
			&\quad=\big(\gamma(h)\lrcorner \theta(h')-\gamma(h')\lrcorner \theta(h)+\theta([h,h']_\h)\\
			&\qquad-[\theta(h),\theta(h')]\big)_{n-1}(\tilde{x}_1\odot\cdots\odot \tilde{x}_n)\\
			&\quad=0.
		\end{align*}
		\item 
		When there are exactly three inputs of $\h[1]$ and no inputs of $\g[1]$,
		the equality $(\widehat{Q}\circ\widehat{Q})_{3}(\tilde{h},\tilde{h'},\tilde{h''})=0$ 
		follows from the Jacobi identity for $\h$.
		\item 
		The remaining case of verifying $(\widehat{Q}\circ\widehat{Q})_{n+1}=0$
		follows from the requirement that $\widehat{Q}_{k+1}$ vanishes with two or more inputs of $\h[1]$ for $k\geqslant 2$.
	\end{itemize}
	Thus $\widehat{Q}$ is a codifferential on $\overline{S(\h[1]\oplus\g[1])}$. 
	The verification of conditions (1) and (2) are immediate.

	Conversely, suppose that $\widehat{Q}$ is a codifferential  on $\overline{S(\h[1]\oplus\g[1])}$ satisfying conditions (1) and (2) of the current Theorem.
	Define $$\gamma(h):=\widehat{Q}_1(\tilde{h}),\quad \text{and}\quad\theta(h):=\mbox{restriction of } \tilde{h}\lrcorner \widehat{Q} \mbox{ on }{\overline{S(\g[1])}},\quad \forall h\in\h, 
	$$
	where $\widehat{Q}_1(\tilde{h})$ is in $\g[1]$ because of the trivial differential on $\h$.
	
	Reversing the above argument, one obtains the
	four compatibility Equations \eqref{Eqt:Q-kappa0},\,\eqref{Eqt:Q-kappa},\,\eqref{Eqt:Q-theta0},\,\eqref{Eqt:Q-theta}
	for $ \gamma$ and $\theta$ as a result of evaluating 
	$\widehat{Q}\circ\widehat{Q}=0$ for various cases of inputs.
\end{proof}
\begin{Rem}We have defined a compatibility condition between Lie algebra actions and morphism of $\Linfty$ algebras in Definition \ref{Def: hactioncompatible}. In terms of the characterization as described by the above theorem,  the compatible condition is equivalent to the statement that the morphism  $F:~\g_1 \to \g_2 $ extends to the morphism  $F^e:~\h\oplus \g_1 \to \h\oplus \g_2 $ (of $\Linfty$ algebras) with  $F^e_1=\id_{\h}\oplus F_1$  and $F^e_{n}=F_n$ for all $n\geqslant 2$. 
	
\end{Rem}

Classically, a Lie algebra action of $\h$ on an object $\mathcal{M}$ is specified by an action map $\h\times \mathcal{M}\rightarrow \mathcal{M}$ satisfying certain compatibility conditions. Similarly, applying $(\theta,\gamma)$ (see Proposition \ref{Prop:equi-action1}) to define for $n\geqslant 1$:
\begin{equation*}\label{Eqt:mudefine}
	\mu_{n}(h,x_1,\cdots,x_{n})[1]:=(-1)^{\frac{(n+1)(n+2)}{2}+\sum_{i=1}^n(n-i)\degree{x_i}}\theta(h)_{n}(\tilde{x}_1\odot\cdots\odot\tilde{x}_n),
\end{equation*}
and $\mu_0(h)[1]:=-\gamma(h)$,
where $h\in\h$ and $x_1,\cdots,x_n\in\g$,
we have the following statement.
\begin{prop}\label{Prop:hactiontomun}
	Let $\h$ be a Lie algebra and $(\g,[\cdots]_k)$ an $L_\infty$ algebra as defined in Definition \ref{Def:Linfty-algebra}. An $\h$-action on $\g$ as defined above is equivalent to	a collection of multilinear maps $\mu_n\colon \h\times\wedge^{n}\g\to\g$ of degree $(1-n)$  for $n\geqslant 0$ which satisfy 
	\smallskip
	\begin{equation}
	\begin{aligned}\label{Eqt:mu-1}
			&\sum\limits_{{\begin{subarray}{c}
						p=1,\cdots,n\\
						\sigma \in \Sh(p,n-p)\end{subarray}}}\chi(\sigma;x_1,\cdots,x_n)\mu_{n-p+1}\big(h,[x_{\sigma(1)} ,\cdots,x_{\sigma(p)}]_p,x_{\sigma(p+1)},\cdots,x_{\sigma(n)}\big)\\
			&\qquad =\sum\limits_{{\begin{subarray}{c}
						p=0,\cdots,n \\
						\sigma \in \Sh(p,n-p)\end{subarray}}}(-1)^{p+1}\chi(\sigma;x_1,\cdots,x_n)\\
					&\qquad\qquad\qquad\qquad\qquad\qquad\big[\mu_{p}(h,x_{\sigma(1)} ,\cdots,x_{\sigma(p)}),x_{\sigma(p+1)},\cdots,x_{\sigma(n)}\big]_{n-p+1},
	\end{aligned}\end{equation}
	and 
		
	\begin{equation}\label{Eqt:mu-2}
		\begin{aligned}
			&\mu_{n}([h,h']_\h,x_1,\cdots,x_n)\\
			&\quad=\sum\limits_{{\begin{subarray}{c}
						p=0,\cdots,n \\
						\sigma \in \Sh(p,n-p)\end{subarray}}}\chi(\sigma;x_1,\cdots,x_n)\\
					&\qquad\qquad\qquad\qquad\mu_{n-p+1}\big(h,\mu_{p}(h',x_{\sigma(1)} ,\cdots,x_{\sigma(p)}),x_{\sigma(p+1)},\cdots,x_{\sigma(n)}\big)\\
					&\\
			&\qquad-\sum\limits_{{\begin{subarray}{c}
						p=0,\cdots,n \\
						\sigma \in \Sh(p,n-p)\end{subarray}}}\chi(\sigma;x_1,\cdots,x_n)\\
					&\qquad\qquad\qquad\qquad\mu_{n-p+1}\big(h',\mu_{p}(h,x_{\sigma(1)} ,\cdots,x_{\sigma(p)}),x_{\sigma(p+1)},\cdots,x_{\sigma(n)}\big),
		\end{aligned}
	\end{equation} for all $h,h'\in\h$ and $x_1,\cdots,x_n\in\g$.
\end{prop}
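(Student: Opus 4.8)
\textbf{Proof strategy for Proposition \ref{Prop:hactiontomun}.} The plan is to translate Proposition \ref{Prop:equi-action1}, which characterizes an $\h$-action in terms of the pair $(\theta,\gamma)$ on the coalgebra side, into the classical language of multibrackets $\mu_n$ via the décalage isomorphism, exactly parallel to how the $L_\infty[1]$ codifferential $Q$ unpacks into the $L_\infty$ brackets $[\cdots]_k$. First I would record the dictionary: the coderivation $\theta(h)$ on $\overline{S(\g[1])}$ is equivalent under \eqref{ISO:reduced} to its sequence of Taylor components $\theta(h)_n\colon S^n(\g[1])\to\g[1]$, all of degree $0$, and $\gamma(h)\in\g[1]^0$; the defining formula for $\mu_n$ in the statement is precisely the décalage renormalization that converts the symmetric, degree-$0$ maps $\theta(h)_n$ (with $\gamma(h)$ as the $n=0$ term, up to sign) into skew-symmetric maps $\mu_n\colon\h\times\wedge^n\g\to\g$ of degree $1-n$. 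So the content is to show that the four coalgebra identities \eqref{Eqt:Q-kappa0}--\eqref{Eqt:Q-theta} are, component by component, equivalent to \eqref{Eqt:mu-1} and \eqref{Eqt:mu-2}.

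The cleanest route, and the one I would actually carry out, is to package everything into the bracket of coderivations and use the same computation that already appears in the proof of Theorem \ref{Thm:hplusg}. Indeed, that proof exhibits a codifferential $\widehat Q$ on $\overline{S(\h[1]\oplus\g[1])}$ whose components are built from $(Q,\theta,\gamma)$, and $\widehat Q\circ\widehat Q=0$ is shown there to be equivalent, case by case on the number of $\h[1]$-inputs, to the four identities of Proposition \ref{Prop:equi-action1}. Now the point is simply that $\widehat Q$ is itself the $L_\infty[1]$ codifferential associated to an $L_\infty$ algebra structure on $\h\oplus\g$, and by the standard bijection (décalage) between codifferentials on $\overline{S(\h[1]\oplus\g[1])}$ and $L_\infty$ brackets on $\h\oplus\g$, the brackets of that $L_\infty$ algebra are precisely: the original $[\cdots]_k$ on $\g$, the Lie bracket $[\cdot,\cdot]_\h$, and the maps $(h,x_1,\ldots,x_n)\mapsto\mu_n(h,x_1,\ldots,x_n)$ (this being the décalage image of $\theta(h)_n$, with $\mu_0$ the image of $-\gamma$). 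Writing out the higher Jacobi rule \eqref{Jacobi identities} for this $L_\infty$ structure with exactly one input from $\h$ yields \eqref{Eqt:mu-1}, and with exactly two inputs from $\h$ (using condition (2) of Theorem \ref{Thm:hplusg} that $n$-brackets vanish on $\geqslant 2$ $\h$-inputs for $n\geqslant 3$, so only the binary bracket $[\cdot,\cdot]_\h$ contributes a double-$\h$ term) yields \eqref{Eqt:mu-2}; the zero-$\h$-input case is just the original $L_\infty$ relations on $\g$, and the three-$\h$-input case is the Jacobi identity of $\h$.

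Concretely, the steps are: (i) state the décalage renormalization converting $\theta(h)_n\leftrightarrow\mu_n$ and $\gamma(h)\leftrightarrow-\mu_0(h)$, and verify the sign and symmetry bookkeeping so that skew-symmetry of $\mu_n$ in its $\g$-arguments and degree $1-n$ are immediate from symmetry and degree $0$ of $\theta(h)_n$; (ii) observe that $\mu_n$ is linear in $h$ because $\theta$ and $\gamma$ are; (iii) unwind \eqref{Eqt:Q-kappa0} and \eqref{Eqt:Q-kappa} together — these two say exactly that $[Q,\psi(h)]=0$ in ${\rm Coder}(S(\g[1]))$, equivalently $[\widehat Q,\widehat Q]$ vanishes on inputs with $\leqslant 1$ entry from $\h[1]$ — into the statement that, after décalage, the map $x_1\wedge\cdots\wedge x_n\mapsto$ (the $n$-th component) reproduces \eqref{Eqt:mu-1}; and (iv) unwind \eqref{Eqt:Q-theta0} and \eqref{Eqt:Q-theta} into \eqref{Eqt:mu-2} in the same manner. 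For the converse direction, one starts from maps $\mu_n$ satisfying \eqref{Eqt:mu-1}--\eqref{Eqt:mu-2}, defines $\theta(h)_n$ and $\gamma(h)$ by inverting the renormalization, and reads the same equivalences backwards.

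The main obstacle, as usual with $L_\infty$ identities, is purely the sign and combinatorial bookkeeping: matching the Koszul signs $\chi(\sigma;x_1,\ldots,x_n)$ and the $(-1)^p$ factors in \eqref{Eqt:mu-1}--\eqref{Eqt:mu-2} against the $\epsilon(\sigma;\tilde x_1,\ldots,\tilde x_n)$ signs appearing in the coderivation composition formulas and against the $(-1)^{\frac{(n+1)(n+2)}{2}+\sum(n-i)|x_i|}$ prefactor in the definition of $\mu_n$. This is exactly the same type of computation that underlies the $L_\infty\leftrightarrow L_\infty[1]$ correspondence recalled in Section 2 and the relation \eqref{Eqt:sign}, so it is routine in principle; I would either invoke that correspondence wholesale by applying it to the $L_\infty$ algebra $\h\oplus\g$ produced in Theorem \ref{Thm:hplusg} — which is the shortest honest proof — or, if one wants a self-contained verification, isolate a single lemma on how the décalage prefactor interacts with insertion of one extra symmetric argument (the ``$h$-slot'') and grind the signs once there.
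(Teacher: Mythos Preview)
Your proposal is correct, and you have in fact sketched two valid routes. The paper takes your second (``self-contained'') route: it works directly from Proposition~\ref{Prop:equi-action1}, expands the $n$-th components of $Q\circ\theta(h)$, $\theta(h)\circ Q$, and $Q_{n+1}\big(\gamma(h)\odot\cdots\big)$ explicitly, and converts each term via the d\'ecalage prefactors to land on \eqref{Eqt:mu-1}; it then states that the same method gives the equivalence of \eqref{Eqt:Q-theta} with \eqref{Eqt:mu-2}. The $n=0$ cases are matched to \eqref{Eqt:Q-kappa0} and \eqref{Eqt:Q-theta0} by inspection.

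Your primary route---passing through Theorem~\ref{Thm:hplusg}, recognising $\mu_n(h,x_1,\ldots,x_n)$ as the $(n{+}1)$-bracket $[h,x_1,\ldots,x_n]_{n+1}$ of the $L_\infty$ algebra $\h\oplus\g$, and reading \eqref{Eqt:mu-1}, \eqref{Eqt:mu-2} as the higher Jacobi identities of $\h\oplus\g$ with exactly one, respectively two, $\h$-inputs---is a genuinely cleaner packaging. It absorbs the sign grinding into a single invocation of the standard $L_\infty\leftrightarrow L_\infty[1]$ correspondence applied to $\h\oplus\g$, whereas the paper redoes that correspondence piecewise on $Q$ and on $\theta(h)$. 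The trade-off is that the paper's proof is logically independent of Theorem~\ref{Thm:hplusg}, while yours cites it; since Theorem~\ref{Thm:hplusg} is proved earlier and independently, this is harmless. One small point to make explicit in your write-up: because $\degree{h}=0$, moving $h$ through the Koszul sign $\chi$ is costless, which is exactly why the one-$\h$-input Jacobi rule collapses neatly to \eqref{Eqt:mu-1} with the factor $(-1)^{p+1}$ coming from the $(-1)^i$ in \eqref{Jacobi identities} at $i=p+1$.
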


\begin{proof}
	When $n=0$, one immediately sees that Equations \eqref{Eqt:mu-1} and \eqref{Eqt:mu-2} are equivalent to Equations \eqref{Eqt:Q-kappa0} and \eqref{Eqt:Q-theta0}. Now assume $n\geqslant 1$, we will verify that Equation \eqref{Eqt:mu-1} is equivalent to Equation \eqref{Eqt:Q-kappa}. 
	
	By the correspondence between codifferentials $Q$ on $\overline{(S(\g[1]))}$ and $L_\infty$ brackets $[\cdots]_k,k\geqslant 1$ on $\g$, we know that 
	\begin{equation*}
		Q_n(\tilde{x}_1,\cdots,\tilde{x}_n)=(-1)^{\frac{n(n+1)}{2}+\sum_{i=1}^n(n-i)\degree{x_i}}([x_1,\cdots,x_n]_n)[1].
	\end{equation*}
	
	Equation \eqref{Eqt:Q-kappa} means that $$Q_{n+1}(\gamma(h)\odot \tilde{x}_1\odot\cdots\odot \tilde{x}_n)+[Q,\theta(h)]_n(\tilde{x}_1\odot\cdots\odot \tilde{x}_n)=0.$$
	We have the explicit evaluations:
		\begin{align*}
			&(Q\circ\theta(h))_n(\tilde{x}_1\odot\cdots\odot\tilde{x}_n)\\
			&\qquad=\sum\limits_{{\begin{subarray}{c}
						p=1,\cdots,n \\
						\sigma \in \Sh(p,n-p)\end{subarray}}}\epsilon(\sigma;\tilde{x}_1,\cdots, \tilde{x}_n)\\
			&\qquad\qquad\qquad Q_{n-p+1}\big(
			\theta(h)_p(\tilde{x}_{\sigma(1)}\odot\cdots\odot\tilde{x}_{\sigma(p)})\odot\tilde{x}_{\sigma(p+1)}\odot\cdots\odot\tilde{x}_{\sigma(n)}
			\big)\\
			&\\
			&\qquad=\sum\limits_{{\begin{subarray}{c}
						p=1,\cdots,n \\
						\sigma \in \Sh(p,n-p)\end{subarray}}}\Big(\epsilon(\sigma;\tilde{x}_1,\cdots, \tilde{x}_n)(-1)^{\frac{(p+1)(p+2)}{2}+\sum_{i=1}^p(p-i)\degree{x_{\sigma(i)}}}\\
			&\qquad\qquad\qquad \cdot Q_{n-p+1}\big(
			\mu_{p}(h,x_{\sigma(1)},\cdots,x_{\sigma(p)})[1]\odot\tilde{x}_{\sigma(p+1)}\odot\cdots\odot\tilde{x}_{\sigma(n)}
			\big)\Big)\\
			&\qquad=\sum\limits_{{\begin{subarray}{c}
						p=1,\cdots,n \\
						\sigma \in \Sh(p,n-p)\end{subarray}}}\Big(\epsilon(\sigma;\tilde{x}_1,\cdots, \tilde{x}_n)(-1)^{\frac{(p+1)(p+2)}{2}+\sum_{i=1}^p(p-i)\degree{x_{\sigma(i)}}}\\
			&\qquad\qquad (-1)^{\frac{(n-p+1)(n-p+2)}{2}+(n-p)(\degree{x_{\sigma(1)}}+\cdots\degree{x_{\sigma(p)}}+2-p-1)+\sum_{j=1}^{n-p}(n-p-j)\degree{x_{\sigma(p+j)}}}\\
			&\qquad\qquad \qquad\cdot\big[
			\mu_{p}(h,x_{\sigma(1)},\cdots,x_{\sigma(p)}),{x}_{\sigma(p+1)},\cdots,{x}_{\sigma(n)}
			\big]_{n-p+1}[1]\Big)\\
			&\qquad=\sum\limits_{{\begin{subarray}{c}
						p=1,\cdots,n \\
						\sigma \in \Sh(p,n-p)\end{subarray}}}\Big(\chi(\sigma;{x}_1,\cdots, {x}_n)(-1)^{\frac{n(n+1)}{2}+p+\sum_{i=1}^n(n-i)\degree{x_i}}\\
			&\qquad\qquad \cdot\big[
			\mu_{p}(h,x_{\sigma(1)},\cdots,x_{\sigma(p)}),{x}_{\sigma(p+1)},\cdots,{x}_{\sigma(n)}
			\big]_{n-p+1}[1]\Big).
			\end{align*}
	Likewise, one verifies two more explicit evaluations:
		\begin{align*}
			&Q_{n+1}(\gamma(h)\odot \tilde{x}_1\odot\cdots\odot \tilde{x}_n)\\
			&\qquad=(-1)^{\frac{(n+1)(n+2)}{2}+n+\sum_{i=1}^n(n-i)\degree{x_i}}[-\mu_0(h),x_1,\cdots,x_n]_{n+1}[1].\\
			&(\theta(h)\circ Q)_n(\tilde{x}_1\odot\cdots\odot\tilde{x}_n)\\
			&\qquad=\sum\limits_{{\begin{subarray}{c}
						p=1,\cdots,n \\
						\sigma \in \Sh(p,n-p)\end{subarray}}}\Big(\chi(\sigma;{x}_1,\cdots, {x}_n)(-1)^{\frac{n(n+1)}{2}+1+\sum_{i=1}^n(n-i)\degree{x_i}}\\
			&\qquad\qquad \cdot\mu_{n-p+1}\big(h,[x_{\sigma(1)} ,\cdots,x_{\sigma(p)}]_p,x_{\sigma(p+1)},\cdots,x_{\sigma(n)}\big)[1]\Big).
		\end{align*}
	Combining the above identities, we obtain the equivalence between Equations \eqref{Eqt:Q-kappa} and \eqref{Eqt:mu-1}. 
	
	Similarly, one verifies that Equation \eqref{Eqt:mu-2} for $n\geqslant 1$ is equivalent to Equation \eqref{Eqt:Q-theta}.
\end{proof}

\textbf{Notation:} In the sequel, we will simply  denote the unary map $\mu_0\colon \h\to \g^1$ by $\kappa$, 
and denote $\mu_{n} (h,x_1,\cdots,x_n)$ by $h\triangleright(x_1,\cdots,x_n)$.
We shall call them the \textbf{$n$-action maps} of $\h$ on $\g$.
When $n=1$, the symbol $h\triangleright(x_1)$ becomes $h\triangleright x_1$ if there is no risk of confusion. 

For small numbers $n$, the  compatibility conditions in  the above proposition are unraveled as follows:
\begin{itemize}
	\item $n=0$  \begin{equation}\label{Eqt:muncondition-0}
		d\circ \kappa=0;\end{equation}
	\item $n=1$ \begin{equation}\label{Eqt:muncondition-11}
		\kappa[h,h']_\h= h\triangleright(\kappa h')- {h'}\triangleright(\kappa h),\end{equation}
	\begin{equation}\label{Eqt:muncondition-12}
		d(h\triangleright x)=[\kappa h,x]_2+ h\triangleright(dx);\end{equation}
	\item $n=2$
	\begin{equation}\label{Eqt:muncondition-21}
		[h,h']_\h\triangleright  x =h\triangleright (h'\triangleright x)-h'\triangleright (h\triangleright x)+h\triangleright(\kappa h',x)-h'\triangleright(\kappa h,x),
	\end{equation}
	\begin{align}\label{Eqt:muncondition-21(2)}
		&h\triangleright(dx_1,x_2)+h\triangleright(x_1,dx_2 )+h\triangleright[x_1,x_2]_2\\
		&=-[\kappa h,x_1,x_2]_3+[h\triangleright x_1,x_2]_2+[x_1,h\triangleright x_2 ]_2-d\big(h\triangleright(x_1,x_2)\big);\nonumber
	\end{align}
	
	\item $n=3$ 
	
	\begin{equation}\begin{aligned} 
			&\sum_{\sigma\in\Sh(1,2)}\chi(\sigma;x_1,x_2,x_3)h\triangleright\big(dx_{\sigma(1)},x_{\sigma(2)},x_{\sigma(3)}\big)+h\triangleright [x_1,x_2,x_3]_3\\
			&\quad+\sum_{\sigma\in\Sh(2,1)}\chi(\sigma;x_1,x_2,x_3)h\triangleright\big([x_{\sigma(1)},x_{\sigma(2)}]_2,x_{\sigma(3)}\big)
			\\
			& =-[\kappa h,x_1,x_2,x_3]_4+\sum_{\sigma\in\Sh(1,2)}\chi(\sigma;x_1,x_2,x_3)[h\triangleright x_{\sigma(1)},x_{\sigma(2)},x_{\sigma(3)}]_{3}\\
			&\quad-\sum_{\sigma\in\Sh(2,1)}\chi(\sigma;x_1,x_2,x_3)\big[h\triangleright(x_{\sigma(1)},x_{\sigma(2)}),x_{\sigma(3)}\big]_{2}\\
			&\quad+d\big(h\triangleright(x_1,x_2,x_3)\big),
	\end{aligned}\end{equation}
	
	\begin{equation}\begin{aligned}\label{Eqt:muncondition-31}
			&[h,h']_\h\triangleright(x_1,x_2,x_3)\\
			&\quad=h\triangleright\big(\kappa h',x_1,x_2,x_3\big)\\
			&\qquad+\sum_{\sigma\in\Sh(1,2)}\chi(\sigma;x_1,x_2,x_3)h\triangleright (h'\triangleright x_{\sigma(1)},x_{\sigma(2)},x_{\sigma(3)})\\
			&\qquad+\sum_{\sigma\in\Sh(2,1)}\chi(\sigma;x_1,x_2,x_3)h\triangleright\big(h'\triangleright(x_{\sigma(1)},x_{\sigma(2)}),x_{\sigma(3)}\big)\\
			&\qquad+h\triangleright\big(h'\triangleright(x_1,x_2,x_3)\big)-h'\triangleright\big(\kappa h,x_1,x_2,x_3\big)\\
			&\qquad-\sum_{\sigma\in\Sh(1,2)}\chi(\sigma;x_1,x_2,x_3)h'\triangleright (h\triangleright x_{\sigma(1)},x_{\sigma(2)},x_{\sigma(3)})\\
			&\qquad-\sum_{\sigma\in\Sh(2,1)}\chi(\sigma;x_1,x_2,x_3)h'\triangleright\big(h\triangleright(x_{\sigma(1)},x_{\sigma(2)}),x_{\sigma(3)}\big)\\
			&\qquad-h'\triangleright\big(h\triangleright(x_1,x_2,x_3)\big).
	\end{aligned}\end{equation}

\end{itemize}

The following fact can be verified by the Jacobi identities in \eqref{eq:3Jacobi} and \eqref{Eqt:muncondition-21(2)}.

\begin{prop}\label{Prop:hactionpassingtoH}
	Suppose that  a Lie algebra $\h$ acts on  an $L_\infty$ algebra $\g$, the $0$-action being $\kappa\colon \h\to \g^1$.
	%Denote by $H(\g)$ the cohomology of $\g$
	%under the differential $d=[\cdot]_1$.
	Then  $\h_0=\ker(\kappa)$ is a Lie subalgebra in $\h$, and $\h_0$ acts on the space  $H(\g)$ (see Remark \ref{rmk:H(g)}), denoted and defined by
	$
	\triangleright:~ \h_0\times H(\g)\to H(\g)
	$,  
	\[h \triangleright \overline{x}= \overline{ h \triangleright x}\]
	for all $h\in \h_0$ and $x\in \g$ which is subject to $dx=0$. Here $\overline{x}$ stands for the cohomology class of $x$.  Moreover, the $\h_0$ action is compatible with the graded Lie algebra structure on $H(\g)$:
	\[
	h\triangleright[\overline{x}_1,\overline{x}_2]  
	=   [h\triangleright \overline{x}_1,\overline{x}_2] +[\overline{x}_1,h\triangleright \overline{x}_2] 
	\]
	for all $\overline{x}_1$ and $\overline{x}_2\in H(\g)$ (in other words, the action is a derivation).
\end{prop}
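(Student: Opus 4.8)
The plan is to work entirely at the level of the explicit $n$-action maps and the unravelled compatibility conditions \eqref{Eqt:muncondition-0}--\eqref{Eqt:muncondition-31}, and then restrict to $\h_0=\ker\kappa$, where the hypothesis $\kappa h=0$ annihilates precisely the $\kappa$-dependent terms. First I would record that $\h_0$ is a Lie subalgebra: it is a linear subspace as the kernel of the linear map $\kappa$, and feeding $h,h'\in\h_0$ into \eqref{Eqt:muncondition-11} gives $\kappa[h,h']_\h=h\triangleright(\kappa h')-h'\triangleright(\kappa h)=0$, so $[h,h']_\h\in\h_0$.

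Next I would show that for $h\in\h_0$ the chain-level map $x\mapsto h\triangleright x$ preserves cocycles and coboundaries, hence descends to $H(\g)$. Indeed \eqref{Eqt:muncondition-12} reads $d(h\triangleright x)=[\kappa h,x]_2+h\triangleright(dx)$; with $\kappa h=0$ this collapses to $d(h\triangleright x)=h\triangleright(dx)$, which shows at once that $h\triangleright x$ is a cocycle whenever $dx=0$ and that $h\triangleright(dy)=d(h\triangleright y)$ is exact. Therefore $h\triangleright\overline{x}:=\overline{h\triangleright x}$ is well defined, and it is $\K$-linear in both arguments by multilinearity of $\mu_1$. For the Lie-algebra-action axiom I would substitute $h,h'\in\h_0$ and a cocycle $x$ into \eqref{Eqt:muncondition-21}: the terms $h\triangleright(\kappa h',x)$ and $h'\triangleright(\kappa h,x)$ vanish since $\mu_2$ is linear in its middle slot and $\kappa h=\kappa h'=0$, leaving $[h,h']_\h\triangleright x=h\triangleright(h'\triangleright x)-h'\triangleright(h\triangleright x)$ already on chains, a fortiori on $H(\g)$.

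The remaining point, the derivation property, would come from \eqref{Eqt:muncondition-21(2)}. Taking $h\in\h_0$ and cocycles $x_1,x_2$ (so $[x_1,x_2]_2$ is again a cocycle by the $n=2$ identity in \eqref{eq:3Jacobi}), the $dx_i$-terms on the left drop out, the term $[\kappa h,x_1,x_2]_3$ on the right drops out by linearity of $[\cdots]_3$ together with $\kappa h=0$, and what survives is
\[
 h\triangleright[x_1,x_2]_2=[h\triangleright x_1,x_2]_2+[x_1,h\triangleright x_2]_2-d\big(h\triangleright(x_1,x_2)\big).
\]
Passing to cohomology kills the exact term $d(h\triangleright(x_1,x_2))$, and since the graded Lie bracket on $H(\g)$ is induced by $[\cdot,\cdot]_2$ (Remark~\ref{rmk:H(g)}), this is exactly $h\triangleright[\overline{x}_1,\overline{x}_2]=[h\triangleright\overline{x}_1,\overline{x}_2]+[\overline{x}_1,h\triangleright\overline{x}_2]$.

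I do not anticipate a genuine obstacle here: the whole argument is the observation that restricting to $\ker\kappa$ removes the $\kappa$-terms and that $d$-exact terms are invisible on cohomology. The only care required is the routine multilinearity bookkeeping (e.g.\ $\mu_2(h,0,x)=0$ and $[0,x_1,x_2]_3=0$) and invoking the standard fact --- itself a consequence of the $n=2$ and $n=3$ Jacobi identities of \eqref{eq:3Jacobi} --- that $\big(H(\g),[\cdot,\cdot]_2\big)$ is a well-defined graded Lie algebra, so that the final identity is independent of the chosen cocycle representatives.
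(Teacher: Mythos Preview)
Your argument is correct and follows exactly the route the paper indicates: the paper does not spell out a proof but merely states that the proposition ``can be verified by the Jacobi identities in \eqref{eq:3Jacobi} and \eqref{Eqt:muncondition-21(2)}'', and your proposal is precisely that verification, using in addition \eqref{Eqt:muncondition-11}, \eqref{Eqt:muncondition-12}, and \eqref{Eqt:muncondition-21} to handle the subalgebra claim, well-definedness on cohomology, and the action axiom. There is nothing to add or correct.
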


\section{Main result:  action of derivations  on the $\Linftythree $ algebra arising from a Lie pair}\label{Sec:Mainresults}

We shall show that the Lie algebra of derivations of  a Lie algebroid 
(also known as morphic vector fields or Lie algebroid derivations, see  \cite{Mackenzie-Xu,Iglesias-Laurent-Xu}) acts on the 
$L_\infty$ algebra $\Omega^\bullet_A(B)$ in the sense of 
Definition~\ref{Def: haction}. Let us first give a conceptually easy  but not very rigorous explanation of this fact. 

As we have mentioned in the introduction, the Lie algebroid structure of $L$ is encapsulated in the dg algebra $(\bsection{\wedge^\bullet L^\vee},d_L)$. In the language of  Va\u{\i}ntrob \cite{Vaintrob1997}, we say that $L[1]$ is a dg manifold. 
The group $\mathrm{Aut}(L)$ of automorphisms of the Lie 
algebroid $L$ has an induced action on $L[1]$.  According to \cite{BCSX}, the ${\Linftythree}$ algebra $\Omega^\bullet_A(B)$ can be considered as the section space of the dg vector bundle $L[1]\to A[1]$. Hence there is an associated action of $\mathrm{Aut}(L)$ on $\Omega^\bullet_A(B)$.  
Therefore,   $\Derivations(L)$, the `Lie algebra' of $\mathrm{Aut}(L)$, acts on 
$\Omega^\bullet_A(B)$ as well. Translating this thought into a purely algebraic description, it becomes  the desired action. Our  Theorem \ref{MainTheorem} below demonstrates the said action.
\begin{Def}\label{Def:Liealgebroiddiff}
	Let $(L,[\cdot,\cdot]_L,\rho_L)$ be a Lie algebroid over $M$. 
	A \textbf{derivation} of $L$ is an operator 
	$\deltap\colon\Gamma(L)\to \Gamma(L)$ which is equipped with some $\deltas\in \mathscr{X}(M)\otimes \K$, called the symbol of $\deltap$, such that
	\begin{eqnarray*}
		\label{Eqt:delta0delta1derivatives}
		\deltap(fu)&=&\deltas  ( f)  u+f\deltap  ( u) ,\\
		\label{Eqt:kdifferential3}
		[\deltas,  \rho_L(u)]&=&\rho_L(\deltap (u))   ,\\
		\label{Eqt:kdifferential4}
		\mbox{	and }~\quad	\deltap  [u,v]_L &=&[\deltap (u),v]_L +[u,\deltap  (v)]_L ,
	\end{eqnarray*}
	for all $f \in {\CinfMK}$, $u,v\in \Gamma(L)$.
\end{Def}
\textbf{Notation:} We will denote by $\Derivations(L)$   the space of derivations of a Lie algebroid $L$. Note that it is naturally a Lie algebra (possibly infinite dimensional) whose Lie bracket is the standard commutator. In \cite{AAC2012}, such derivations are called $1$-differentials of $L$.

%Our main result is the following theorem.
\begin{Thm}  \label{MainTheorem}
	Given a Lie pair $(L,A)$ and a decomposition $L\cong A\oplus B$,
	the Lie algebra $\Derivations(L)$ acts on the associated ${\Linftythree}$ 
	algebra $\Omega^\bullet_A(B)$ with the action maps specified as follows.
	\begin{itemize}
		\item[(1)] The $0$-action $\mu_0=\kappa:~\Derivations(L)\to\Omega_A^1(B)$ is given by 
		$$\kappa(\delta)(a):= -\projection_B\delta(a),\quad\forall a\in \Gamma(A).  $$
		\item[(2)] The $1$-action  $
		\mu_1:~\Derivations(L)\times \Omega_A^k(B)\to\Omega_A^k(B) $
		% ($k\geqslant  0$)
		is defined in two situations: 
		\begin{enumerate}
			\item[$\bullet$ ]if $k=0$,  then we define $\mu_1(\delta,b)=\delta\triangleright b \in \Gamma(B)$ for $\delta\in\Derivations(L)$ and $b\in \Gamma(B)$   by
			$$\delta\triangleright b :=\projection_B\delta(b);$$
			\item[$\bullet$ ]if $k\geqslant  1$, then  we define $\mu_1(\delta,X)=\delta\triangleright X \in \Omega_A^k(B)$  for 
			$\delta\in\Derivations(L)$ and $X\in\Omega_A^k(B)$   by
			$$ \begin{aligned}
				&(\delta\triangleright X) (a_1,\cdots,a_k)\\
				&\qquad:=-\sum_{j=1}^{k}X\big(a_1,\cdots,\projection_{A}\delta(a_j),\cdots,a_k\big) +\projection_{B} \delta\big( X(a_1,\cdots,a_k)\big), \end{aligned}$$ 
			where $a_1,\cdots,a_k\in\Gamma(A)$.
		\end{enumerate}

		\item[(3)] The $2$-action $
		\mu_2:~\Derivations(L)\times \Omega_A^i(B)\times \Omega_A^j(B)\to\Omega_A^{i+j-1}(B)$
		is defined by the situations:
		\begin{enumerate}
			\item[$\bullet$]If $i=j=0$, we set 
			$$\mu_2(\delta,b,b')=\delta\triangleright (b,b'):=  0,\quad \forall \delta\in \Derivations(L), b,b'\in \Gamma(B);$$
			\item[$\bullet$]If $i\geqslant  1$, $j=0$, we define 
			$\mu_2(\delta,X,b)=\delta\triangleright (X,b)\in \Omega_A^{i-1}(B)$ for $\delta\in \Derivations(L)$, $X \in \Omega_A^{i}(B)$ and $b\in \Gamma(B)$ by		
			$$\big(\delta\triangleright (X,b)\big)(a_1,\cdots,a_{i-1}):=X \big(\projection_A\delta(b),a_1,\cdots,a_{i-1}\big),$$ 
			$\forall a_1,\cdots, a_{i-1}\in \Gamma(A)$.
			\item[$\bullet$] If $i=0$ and $j\geqslant  1$, the situation  is  similar: 
			$$\big(\delta\triangleright (b,X)\big)(a_1,\cdots,a_{i-1}):= -  X \big(\projection_A\delta(b),a_1,\cdots,a_{i-1}\big),$$ 
			$\forall a_1,\cdots, a_{i-1}\in \Gamma(A)$.
			\item[$\bullet$]If $i\geqslant  1$, $j\geqslant  1$, we define $\mu_2(\delta,X,Y)=\delta\triangleright (X,Y)\in \Omega_A^{i+j-1}(B)$ for $\delta\in \Derivations(L)$, $X\in \Omega_A^{i}(B)$ and $Y\in \Omega_A^{j}(B)$ by
			 \begin{align*}
				&\big(\delta\triangleright (X,Y)\big)(a_1,\cdots,a_{i+j-1})\\
				&\quad:=(-1)^{i+1}\sum_{\sigma\in \Sh(i,j-1)}{\rm sgn}
				(\sigma)Y\Big( \projection_{A} \delta \big(X(a_{\sigma(1)},\cdots)\big) ,a_{\sigma(i+1)},\cdots\Big)\\
				&\quad\quad+\sum_{\sigma\in \Sh(i-1,j)}{\rm sgn}
				(\sigma)X \Big( \projection_{A}\delta \big(Y(a_{\sigma(i)},\cdots)\big) ,a_{\sigma(1)},\cdots\Big), \end{align*}
			where $a_1,\cdots,a_{i+j-1}\in\Gamma(A)$.
			
		\end{enumerate}
		\item [(4)]All higher $n$-actions ($n\geqslant  3$) are trivial.
	\end{itemize}
	Moreover, the structure maps  $\mu_n$ are subject to the following properties: for any $\delta\in  \Derivations(L)$ whose symbol is denoted by $\deltas$, and for any $X,Y\in \Omega_A^{\bullet}(B)$,   $f\in C^\infty(M,\K)$, $\omega\in \Omega_A^{\bullet}$, we have
	\begin{itemize}
		\item[(i)] for the $0$-action $\kappa$,
		$$\kappa(f\delta)=f\kappa(\delta);$$
		\item[(ii)] for the $1$-action,
		$$ (f\delta)\triangleright X=f \big(\delta \triangleright X\big) ;$$
		$$  \delta \triangleright (\omega\cdot X)=\big(\varrho_1(\delta)(\omega)\big)\cdot X + \omega\cdot (\delta \triangleright X) ,$$
		where $\varrho_1:~\Derivations(L)\to{\rm Der}^0(\Omega_A^\bullet)$ is defined for $\omega\in \Omega_A^k$ by
		$$
		\begin{aligned}
			&\big(\varrho_1(\delta)(\omega)\big)(a_1,\cdots,a_k)\\
			&\qquad:=
			s \big(\omega(a_1,\cdots,a_k)\big)-\sum_{j=1}^{k}\omega\big(a_1,\cdots,\projection_{A}\delta(a_j),\cdots,a_k\big);
		\end{aligned} $$
		\item[(iii)] for the $2$-action,
		%The $3$-action $\mu_3$ is generated by the following relation
		%$$\mu_3(\delta,b,b')=0$$
		%such that the following compatibility condition is satisfied: 
		$$ (f\delta)\triangleright (X,Y)=f\big(\delta \triangleright (X,Y)\big) ;$$
		$$ \delta\triangleright(  X ,\omega\cdot Y)=
		\big(\varrho_2(\delta,X)(\omega)\big)\cdot Y+(-1)^{\degree{\omega}(1+\degree{X})}\omega\cdot \big(\delta\triangleright(  X,Y)\big),$$
		where $\varrho_2:~\Derivations(L)\times \Omega_A^\bullet(B)\to {\rm Der}^{\bullet-1}(\Omega_A^\bullet)$ is defined for $X\in \Omega_A^i(B)$, $\omega\in \Omega_A^k$ by:
		\begin{eqnarray*}
			&&\big(\varrho_2(\delta,X)(\omega)\big)(a_1,\cdots,a_{i+k-1})\\
			&:=&(-1)^{i+1}\sum_{\sigma\in\Sh(i,k-1)}{\rm sgn}(\sigma)\omega\Big(\projectionA\delta\big(  X(a_{\sigma(1)},\cdots)\big),a_{\sigma(i+1)},\cdots\Big).
		\end{eqnarray*}
	\end{itemize}	
	
\end{Thm}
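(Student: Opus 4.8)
The plan is to verify the action directly through the combinatorial characterization of Proposition~\ref{Prop:hactiontomun}: one must check that the explicitly prescribed maps $\mu_0=\kappa$, $\mu_1$, $\mu_2$ (and $\mu_n=0$ for $n\geqslant 3$) satisfy the compatibility equations \eqref{Eqt:mu-1} and \eqref{Eqt:mu-2}. Since all $\mu_n$ with $n\geqslant 3$ vanish and the $\Linftythree$ brackets $[\cdots]_k$ vanish for $k\geqslant 4$, a bookkeeping of arities (on the left a summand $\mu_{n-p+1}(\cdots,[\cdots]_p,\cdots)$ can be nonzero only for $n-p+1\leqslant 2$ and $p\leqslant 3$; on the right a summand $[\mu_p(\cdots),\cdots]_{n-p+1}$ only for $p\leqslant 2$ and $n-p+1\leqslant 3$) shows \eqref{Eqt:mu-1} is trivial for $n\geqslant 5$ and \eqref{Eqt:mu-2} for $n\geqslant 4$. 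Hence it suffices to establish \eqref{Eqt:muncondition-0}, \eqref{Eqt:muncondition-11}, \eqref{Eqt:muncondition-12}, \eqref{Eqt:muncondition-21}, \eqref{Eqt:muncondition-21(2)}, \eqref{Eqt:muncondition-31}, the $n=3$ instance of \eqref{Eqt:mu-1}, and the (undisplayed) $n=4$ instance of \eqref{Eqt:mu-1}. One could equivalently work at the level of coderivations and check conditions \eqref{Eqt:Q-kappa0}--\eqref{Eqt:Q-theta} of Proposition~\ref{Prop:equi-action1}; the computational cost is comparable.

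The organizing idea is to reduce everything to arguments lying in $\Omega_A^0(B)=\Gamma(B)$. First I would prove the auxiliary assertions (i)--(iii) of the statement: that $\kappa$ is $C^\infty(M,\K)$-linear, that $\varrho_1(\delta)$ and $\varrho_2(\delta,X)$ are genuine derivations of $\Omega_A^\bullet$ of the stated degrees, and that $\mu_1,\mu_2$ obey the displayed Leibniz rules over the $\Omega_A^\bullet$-module structure --- each is a one-line substitution into the explicit formulas using only $\delta(fu)=s(f)u+f\delta(u)$. Combined with the Leibniz relations for $\dBott$, $[\cdot,\cdot]_2$, $[\cdot,\cdot,\cdot]_3$ from Theorem~\ref{Thm:Linftystructuregenerator} and the fact that $\Omega_A^\bullet(B)=\Omega_A^\bullet\otimes_{C^\infty(M,\K)}\Gamma(B)$ is generated in degree $0$, this reduces every compatibility identity to the case where all $\Omega_A^\bullet(B)$-inputs are sections of $B$. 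For the $C^\infty(M,\K)$-multilinear identities --- the $n=3$ and $n=4$ instances of \eqref{Eqt:mu-1} and identity \eqref{Eqt:muncondition-31}, in which only the $C^\infty(M,\K)$-linear maps $\mu_2$, $[\cdot,\cdot,\cdot]_3$, $\varrho_2$ occur --- the reduction is immediate and the identities hold tautologically on generators $b,b',b''\in\Gamma(B)$ because $[b,b',b'']_3=0$ and $\mu_2(\delta,b,b')=0$ make both sides vanish.

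There remain \eqref{Eqt:muncondition-0}, \eqref{Eqt:muncondition-11}, \eqref{Eqt:muncondition-12}, \eqref{Eqt:muncondition-21}, \eqref{Eqt:muncondition-21(2)}, to be verified on sections of $B$. Each unwinds, via the formula \eqref{Eqt:dABot} for $\dBott$ and those of Proposition~\ref{Prop:2and3bracket}, into an identity among sections of $A^\vee\otimes B$ (or of $B$), which I would prove by inserting the decomposition $\delta(u)=\mathrm{pr}_A\delta(u)+\mathrm{pr}_B\delta(u)$ and repeatedly applying the three defining identities of a derivation of $L$ (Definition~\ref{Def:Liealgebroiddiff}) together with the description of $[\cdot,\cdot]_L$ on $L\cong A\oplus B$ in terms of $\nabla$, $\eth$, $\beta$, $[\cdot,\cdot]_B$. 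For example \eqref{Eqt:muncondition-0} asks $\dBott\kappa(\delta)=0$; expanding and using $\delta[a_1,a_2]_L=[\delta a_1,a_2]_L+[a_1,\delta a_2]_L$, $\mathrm{pr}_B[\mathrm{pr}_A\delta(a_i),a_j]_L=0$, and $\mathrm{pr}_B[\mathrm{pr}_B\delta(a_i),a_j]_L=-\nabla_{a_j}\mathrm{pr}_B\delta(a_i)$ produces a four-term cancellation. Identity \eqref{Eqt:muncondition-11} is immediate from bilinearity of the commutator; \eqref{Eqt:muncondition-12} matches the anchor contributions in $\dBott$ using also the symbol compatibility $[s,\rho_L(u)]=\rho_L(\delta(u))$; and \eqref{Eqt:muncondition-21} follows from the derivation property of the commutator bracket on $\Derivations(L)$.

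The genuine obstacle I expect is \eqref{Eqt:muncondition-21(2)}, the $n=2$ instance of \eqref{Eqt:mu-1}, which couples $\dBott$, $[\cdot,\cdot]_2$, the ternary bracket $[\cdot,\cdot,\cdot]_3$, $\kappa$, $\mu_1$ and $\mu_2$ simultaneously; even after reduction to $b,b'\in\Gamma(B)$ one must expand both sides on generic $a_i\in\Gamma(A)$ and invoke the precise form of $\beta$ together with the quadratic constraint on $B$ encoded by $d_L^2=0$ in the splitting (the identity expressing the Jacobiator of $[\cdot,\cdot]_B$ through $\beta$, $\nabla$ and $\eth$). I would organize it by sorting terms according to whether the derivation has been projected by $\mathrm{pr}_B$ or by $\mathrm{pr}_A$: the $\mathrm{pr}_B\delta$-terms should cancel using the $n=2$ Bott-connection identities, and the $\mathrm{pr}_A\delta$-terms using the derivation property of $\eth$ and the definition of $\beta$. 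The $n=3$ instance of \eqref{Eqt:mu-1} is of the same flavour but lighter, being handled by the generator argument above, and the independence of the construction of auxiliary choices, up to the isomorphisms of Remark~\ref{Rmk:depedenceofsplittings}, is then read off afterward.
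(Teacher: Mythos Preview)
Your proposal is correct and follows the paper's own argument step for step: first establish the auxiliary Leibniz-type properties (i)--(iii), then use them together with the generating relations of Theorem~\ref{Thm:Linftystructuregenerator} to reduce each instance of \eqref{Eqt:mu-1} and \eqref{Eqt:mu-2} to degree-zero inputs $b_i\in\Gamma(B)$, where the identities become short direct computations with the derivation axioms for $\delta$. One small correction: you overestimate \eqref{Eqt:muncondition-21(2)}---after reduction to $b_1,b_2\in\Gamma(B)$ both sides lie in $\Omega_A^0(B)=\Gamma(B)$ (there are no $a_i$'s to insert), and the identity follows in a few lines from $\delta[b_1,b_2]_L=[\delta b_1,b_2]_L+[b_1,\delta b_2]_L$ alone, without any appeal to the Jacobiator constraint encoded in $d_L^2=0$.
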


\begin{proof}
	Properties $(i)$, $(ii)$ and $(iii)$ follow by direct verifications
	using their definitions given by the theorem. 
	Next, we apply Proposition \ref{Prop:hactiontomun} to prove that
	these action maps $\mu_{n}$ do define a Lie algebra action. 
	Since $\mu_{k}$ and $[\cdots]_{k+1}$ vanish for
	$k\geqslant 3$, we only need to verify Equations \eqref{Eqt:mu-1} and \eqref{Eqt:mu-2} for $n=0,1,2,3$, which are unraveled in Equations \eqref{Eqt:muncondition-0} to \eqref{Eqt:muncondition-31}, plus one more equation of \eqref{Eqt:mu-1} for $n=4$.
	
	\begin{itemize}
		\item When $n=0$, we need to show $\dBott\circ \kappa$ vanishes.
		In fact, by Equation \eqref{Eqt:dABot} and the definition of $\kappa$,
		we have for $\delta\in\Derivations(L)$ and $a_1,a_2\in\Gamma(A)$ that
		\begin{align*}
			&\dBott\big(\kappa(\delta)\big)(a_1,a_2)\\
			&\quad=\nabla_{a_1}\big(\kappa(\delta)(a_2)\big)-\nabla_{a_2}\big(\kappa(\delta)(a_1)\big)-\kappa(\delta)([a_1,a_2]_A)\\
			&\quad=-\projectionB[a_1,\delta(a_2)]_L-\projectionB[\delta(a_1),a_2]_L+\projectionB\big(\delta[a_1,a_2]_A\big)=0.
		\end{align*}	
		
		\item When $n=1$, we need to verify the following two equalities for
		$\delta,\delta'\in\Derivations(L)$ and ${X}\in\Omega^\bullet_A(B)$:
		\begin{gather}
			\label{Eqt:temp1} \kappa([\delta,\delta'])=\delta\triangleright\big(\kappa(\delta')\big)-\delta'\triangleright\big(\kappa(\delta)\big),\\
			\label{Eqt:temp2} 
			\dBott\big(\delta\triangleright{X}\big)=[\kappa(\delta),{X}]_2+\delta\triangleright(\dBott {X}).
		\end{gather}
		
		To verify Equation \eqref{Eqt:temp1}, by the definition of $\kappa$, we have 
		$$\kappa(\delta)=-\projectionB \circ \delta|_{\Gamma(A)}=-\delta|_{\Gamma(A)}+\projectionA \circ \delta|_{\Gamma(A)}.$$
		Thus, we get:
		\begin{align*}
			\kappa([\delta,\delta'])
			&=-\projectionB \circ\delta\circ\delta'|_{\Gamma(A)}
			+\projectionB \circ\delta'\circ\delta|_{\Gamma(A)}\\
			&=\projectionB \circ\delta\circ\Big(\kappa(\delta')-\projectionA\circ \delta'|_{\Gamma(A)}\Big)\\
			&\qquad
			-\projectionB \circ\delta'\circ\Big(\kappa(\delta)-\projectionA\circ \delta|_{\Gamma(A)}\Big)\\
			&=\delta\triangleright\big(\kappa(\delta')\big)-\delta'\triangleright\big(\kappa(\delta)\big).
		\end{align*}	
		
		To show Equation \eqref{Eqt:temp2}, we begin with the case where $ {X}$ is merely a generating element: ${X}=b\in\Gamma(B)$.
		By Equation \eqref{Eqt:dABot} of $\dBott$ and the definition of $\mu_1$,
		we can examine the following identities for any $a\in \Gamma(A)$.
		\begin{align*}
			\big(\dBott (\delta\triangleright{b})\big)(a)
			=&\nabla_a(\delta\triangleright b )
			=\projectionB[a,\delta\triangleright b]_L
			=\projectionB[a,\projectionB\delta(b)]_L\\
			=&\projectionB[a,\delta(b)]_L
			=\projectionB\big(\delta[a,b]_L\big)-\projectionB[\delta(a),b]_L;\\
			\big(\delta\triangleright(\dBott {b})\big)(a)
			=&\delta\triangleright\big((\dBott b)(a)\big)-(\dBott b)(\delta\triangleright a)\\
			=&\delta\triangleright(\nabla_a b)-\nabla_{\delta\triangleright a}b\\
			=&\projectionB\big(\delta(\projectionB[a,b]_L)\big)-\projectionB\big[\projectionA\big(\delta(a)\big),b\big]_L.
		\end{align*}
		By the expression of $[\cdot,\cdot]_2$ in Proposition \ref{Prop:2and3bracket}, we have
\begin{align*}
			[\kappa(\delta),b]_2(a)&=\kappa(\delta)(\eth_b a)+[\kappa(\delta)(a),b]_B\\
			&=\projectionB\big(\delta(\projectionA[a,b]_L)\big)
			-\projectionB\big[\projectionB\big(\delta(a)\big),b\big]_L.
\end{align*}
		Hence, Equation \eqref{Eqt:temp2} holds for $X=b\in\Gamma(B)$. 
		The  verification of Equation \eqref{Eqt:temp2} for general $X\in\Omega^\bullet_A(B)$ follows from the property described in $(ii)$.  
		
		\item When $n=2$, we need to verify the following two equalities for 
		$\delta,\delta'\in\Derivations(L)$ and ${X},{X}_1,{X}_2\in\Omega_A^\bullet(B)$:
		\begin{equation*}
			[\delta,\delta']\triangleright{X}=\delta\triangleright\big(\kappa(\delta'),{X}\big)+\delta\triangleright(\delta'\triangleright{X})-\delta'\triangleright\big(\kappa(\delta),{X}\big)-\delta'\triangleright(\delta\triangleright{X}),
		\end{equation*}
	and
	\begin{align*}
				&\delta\triangleright(\dBott{X}_1,{X}_2)+(-1)^{1+\degree{{X}_1}\degree{{X}_2}}\delta\triangleright(\dBott{X}_2,{X}_1)+\delta\triangleright([{X}_1,{X}_2]_2)\\
				&\qquad=-[\kappa(\delta),{X}_1,{X}_2]_3
				+[\delta\triangleright{X}_1,{X}_2]_2\\
				&\qquad\qquad+(-1)^{1+\degree{{X}_1}\degree{{X}_2}}[\delta\triangleright{X}_2,{X}_1]_2
				-\dBott\big(\delta\triangleright({X}_1,{X}_2)\big).
		\end{align*}
		
		By the properties in $(iii)$, it suffices to consider the situations 
		where  ${X}=b\in\Gamma(B)$, ${X}_1=b_1\in\Gamma(B)$, and 
		${X}_2=b_2\in\Gamma(B)$. Using the definitions of $\kappa$ and $\mu_1$,
		we get
		\begin{align*}
			&\delta\triangleright\big(\kappa(\delta'),b\big)
			+\delta\triangleright(\delta'\triangleright b)
			-\delta'\triangleright\big(\kappa(\delta),b\big)
			-\delta'\triangleright(\delta\triangleright b)\\
			=&\kappa(\delta')\big(\projectionA\delta (b)\big)
			+\projectionB\delta\big(\projectionB\delta'(b)\big)
			-\kappa(\delta)\big(\projectionA\delta' (b)\big)
			-\projectionB\delta'\big(\projectionB\delta(b)\big)\\
			=&-\projectionB\delta'\big(\projectionA\delta (b)\big)
			+\projectionB\delta\big(\projectionB\delta'(b)\big)
			+\projectionB\delta\big(\projectionA\delta' (b)\big)
			-\projectionB\delta'\big(\projectionB\delta(b)\big)\\
			=&(\projectionB\circ\delta\circ\delta')(b)
			-(\projectionB\circ\delta'\circ\delta)(b)
			=[\delta,\delta']\triangleright b.
		\end{align*}
		Using the definitions of $\kappa$, $\mu_1$, $\mu_2$ and the expression of 
		$[\cdot,\cdot,\cdot]_3$ in Proposition \ref{Prop:2and3bracket},
		we get
		\begin{align*}
			&\delta\triangleright(\dBott b_1,b_2)-\delta\triangleright(\dBott b_2,b_1)+\delta\triangleright([b_1,b_2]_2)\\
			=&(\dBott b_1)\big(\projectionA\delta(b_2)\big)
			-(\dBott b_2)\big(\projectionA\delta(b_1)\big)
			+\projectionB\delta[b_1,b_2]_B\\
			=&\projectionB\big[\projectionA\delta(b_2),b_1\big]_L
			-\projectionB\big[\projectionA\delta(b_1),b_2\big]_L
			+\projectionB\delta\big(\projectionB[b_1,b_2]_L\big)\\
			=&-\projectionB[\projectionB \delta(b_2),b_1]_L
			+\projectionB[\projectionB \delta(b_1),b_2]_L
			-\projectionB\delta\big(\projectionA[b_1,b_2]_L\big)\\
			=&-[\delta\triangleright b_2,b_1]_2
			+[\delta\triangleright b_1,b_2]_2
			-[\kappa(\delta),b_1,b_2]_3
			-\dBott\big(\delta\triangleright(b_1,b_2)\big).
		\end{align*}
		
		\item When $n=3$, by the vanishing of $\mu_3$ and $[\cdots]_4$,
		we need to verify the following two equalities for
		$\delta,\delta'\in\Derivations(L)$ and ${X}_1,{X}_2,{X}_3\in\Omega_A^\bullet(B)$:
		\begin{align*}
			&\sum_{\sigma\in\Sh(2,1)}\chi(\sigma;{X}_1,{X}_2,{X}_3) 
			\delta\triangleright
			\big([{X}_{\sigma(1)},{X}_{\sigma(2)}]_2,{X}_{\sigma(3)}\big)
			+\delta\triangleright[{X}_1,{X}_2,{X}_3]_3\\
			=&\sum_{\sigma\in\Sh(1,2)}\chi(\sigma;{X}_1,{X}_2,{X}_3)
			\big[\delta\triangleright {X}_{\sigma(1)},{X}_{\sigma(2)},{X}_{\sigma(3)}\big]_3\\
			&-\sum_{\sigma\in\Sh(2,1)}\chi(\sigma;{X}_1,{X}_2,{X}_3)
			\big[\delta\triangleright({X}_{\sigma(1)},{X}_{\sigma(2)}),{X}_{\sigma(3)}\big]_2\,,
		\end{align*}
	and
	\begin{align*}
			&\sum_{\sigma\in\Sh(2,1)}\chi(\sigma;{X}_1,{X}_2,{X}_3)\Big(\delta\triangleright\big(\delta'\triangleright({X}_{\sigma(1)},{X}_{\sigma(2)}),{X}_{\sigma(3)}\big)\Big)\\
			&\qquad=\sum_{\sigma\in\Sh(2,1)}\chi(\sigma;{X}_1,{X}_2,{X}_3)\Big(\delta'\triangleright\big(\delta\triangleright({X}_{\sigma(1)},{X}_{\sigma(2)}),{X}_{\sigma(3)}\big)\Big).
		\end{align*}
		In fact, when the three $X_i$'s are of the form $b_i\in\Gamma(B)$,
		all the terms in these equalities are trivial.
		For general $X_i$, one resorts to (5) and (6) of Theorem 
		\ref{Thm:Linftystructuregenerator}, and properties  $(ii)$ and $(iii)$.
		
		\item When $n\geqslant4$, by the vanishing of $\mu_{k}$ and $[\cdots]_{k+1}$ for $k\geqslant 3$, we are only left to verify the following equality for $\delta\in\Derivations(L)$ and ${X}_1,\cdots,{X}_4\in\Omega_A^\bullet(B)$:
		\begin{align*}
			&\sum_{\sigma\in\Sh(3,1)}\chi(\sigma;{X}_1,\cdots,{X}_4)
			\delta\triangleright\big([{X}_{\sigma(1)},{X}_{\sigma(2)},{X}_{\sigma(3)}]_3,{X}_{\sigma(4)}\big)\\
			=&-\sum_{\sigma\in\Sh(2,2)}\chi(\sigma;{X}_1,\cdots,{X}_4)
			\big[\delta\triangleright({X}_{\sigma(1)},{X}_{\sigma(2)}),{X}_{\sigma(3)},{X}_{\sigma(4)}\big]_3.
		\end{align*}
	\end{itemize}		The argument is similar to the $n=3$ case.
\end{proof}

Following Theorems \ref{Thm:hplusg} and \ref{MainTheorem}, we have a corollary.
\begin{Cor}\label{Cor:main}
	The space $\Derivations(L)\oplus\Omega^\bullet_A(B)$ admits an ${\Linftythree}$ algebra structure which extends the ${\Linftythree}$ structure on $\Omega^\bullet_A(B)$.
\end{Cor}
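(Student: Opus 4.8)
The plan is to read this off directly from the two preceding theorems. First I would set $\h := \Derivations(L)$ and $\g := \Omega^\bullet_A(B)$. By Theorem~\ref{MainTheorem}, $\h$ acts on the $\Linftythree$ algebra $\g$ in the sense of Definition~\ref{Def: haction}; in the equivalent language of Proposition~\ref{Prop:equi-action1} this action is a pair $(\theta,\gamma)$ satisfying the compatibility equations \eqref{Eqt:Q-kappa0}--\eqref{Eqt:Q-theta}, and in the language of Proposition~\ref{Prop:hactiontomun} it is the family of action maps $\mu_0=\kappa$, $\mu_1$, $\mu_2$, with $\mu_n=0$ for $n\geqslant 3$. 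Then I would invoke Theorem~\ref{Thm:hplusg}: feeding this action into that theorem, the direct sum $\h\oplus\g=\Derivations(L)\oplus\Omega^\bullet_A(B)$, with $\h$ concentrated in degree $0$, acquires an $L_\infty$ algebra structure $\widehat{Q}$ which restricts to the $\Linftythree$ structure on $\g$, restricts to the Lie bracket on $\h$, and makes $0\to\g\to\h\oplus\g\to\h\to 0$ a sequence of $L_\infty$ morphisms.

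The one point that needs a remark — and that upgrades ``an $L_\infty$ structure'' to ``an $\Linftythree$ structure'' — is that $\widehat{Q}_n=0$ for all $n\geqslant 4$. I would check this against the explicit components of $\widehat{Q}$ constructed in the proof of Theorem~\ref{Thm:hplusg}. With no input from $\h[1]$ one has $\widehat{Q}_n=Q_n$, which vanishes for $n\geqslant 4$ because $\g$ is an $\Linftythree$ algebra. With exactly one input from $\h[1]$ one has $\widehat{Q}_n(\tilde h\odot\tilde x_1\odot\cdots\odot\tilde x_{n-1})=\theta(h)_{n-1}(\tilde x_1\odot\cdots\odot\tilde x_{n-1})$, and $\theta(h)_{n-1}$ corresponds, up to sign, to the action map $\mu_{n-1}$, which vanishes once $n-1\geqslant 3$, i.e. $n\geqslant 4$. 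With two or more inputs from $\h[1]$, $\widehat{Q}_n$ is by construction required to vanish already for $n\geqslant 3$. Hence every bracket of $\Derivations(L)\oplus\Omega^\bullet_A(B)$ beyond the ternary one is trivial, which is precisely the $\Linftythree$ condition, and the inclusion $\Omega^\bullet_A(B)\hookrightarrow\Derivations(L)\oplus\Omega^\bullet_A(B)$ is a morphism of $\Linftythree$ algebras.

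There is essentially no obstacle: the corollary is a formal consequence of Theorems~\ref{MainTheorem} and~\ref{Thm:hplusg} together with the degree bookkeeping above. If a reader wants the structure maps explicitly, I would note that, unwinding the dictionary $\mu_n(h,x_1,\dots,x_n)[1]=\pm\,\theta(h)_n(\tilde x_1\odot\cdots\odot\tilde x_n)$ of Proposition~\ref{Prop:hactiontomun}, the unary bracket of $\Derivations(L)\oplus\Omega^\bullet_A(B)$ sends $\delta\mapsto\kappa(\delta)$ on $\Derivations(L)$ and is $\dBott$ on $\Omega^\bullet_A(B)$, the binary bracket assembles the Lie bracket of $\Derivations(L)$, the $1$-action $\mu_1$, and the bracket $[\cdot,\cdot]_2$ of Theorem~\ref{Thm:Linftystructuregenerator}, and the ternary bracket assembles $\mu_2$ with $[\cdot,\cdot,\cdot]_3$; but this explicit description is only a matter of signs and is not needed for the statement itself.
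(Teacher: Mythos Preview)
Your proposal is correct and follows exactly the approach the paper indicates: the corollary is stated with no proof beyond the sentence ``Following Theorems~\ref{Thm:hplusg} and~\ref{MainTheorem}, we have a corollary.'' Your additional verification that $\widehat{Q}_n=0$ for $n\geqslant 4$ (by separately inspecting the cases of zero, one, or more inputs from $\h[1]$) is the one point the paper leaves implicit, and your argument for it is accurate.
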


Define a subspace of $\Derivations(L)$:
\[\Derivations(L,A):=\ker\kappa=\{\delta\in \Derivations(L)~|~ \delta \bsection{A}\subset \bsection{A}\}.\]
Following Proposition \ref{Prop:hactionpassingtoH}, we have another corollary. 
\begin{Cor}\label{Cor:actiononH} The above $\Derivations(L,A)$ is a Lie algebra and it acts by derivation on the graded Lie algebra $H\big(\Omega^\bullet _A(B),[\cdot]_1=\dBott\big)=H_{\mathrm{CE}}(A;B)$.
\end{Cor}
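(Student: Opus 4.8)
The plan is to obtain this as a direct consequence of Theorem~\ref{MainTheorem} and the general Proposition~\ref{Prop:hactionpassingtoH}, so that the argument amounts to unwinding definitions. First I would apply Theorem~\ref{MainTheorem} with $\h=\Derivations(L)$ and $\g=\Omega^\bullet_A(B)$, fixing the $\Derivations(L)$-action whose $0$-action is $\kappa$, given by $\kappa(\delta)(a)=-\projectionB\delta(a)$. Proposition~\ref{Prop:hactionpassingtoH} then says that $\h_0:=\ker\kappa$ is a Lie subalgebra of $\Derivations(L)$ and that $\h_0$ acts on the cohomology $H(\g)$ by $\delta\triangleright\overline{x}:=\overline{\delta\triangleright x}$ for $\dBott$-closed $x$, this action being a derivation of the graded Lie bracket on $H(\g)$.

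Next I would identify $\ker\kappa$ with $\Derivations(L,A)$. Since $\kappa(\delta)(a)=-\projectionB\delta(a)$ for all $a\in\Gamma(A)$, we have $\kappa(\delta)=0$ exactly when $\projectionB\delta(a)=0$ for every $a$, i.e.\ $\delta(a)\in\Gamma(A)$, i.e.\ $\delta\,\Gamma(A)\subseteq\Gamma(A)$ --- which is precisely the defining condition of $\Derivations(L,A)$. That $\Derivations(L,A)$ is closed under the commutator bracket, hence is itself a Lie algebra, is then immediate (alternatively it follows from Equation~\eqref{Eqt:muncondition-11}, whose right-hand side $\delta\triangleright(\kappa\delta')-\delta'\triangleright(\kappa\delta)$ vanishes when $\kappa\delta=\kappa\delta'=0$, forcing $\kappa[\delta,\delta']=0$).

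Finally, $H(\g)=H\big(\Omega^\bullet_A(B),[\cdot]_1=\dBott\big)$ is by definition $H_{\mathrm{CE}}(A;B)$, and by Remark~\ref{rmk:H(g)} it carries the graded Lie algebra structure induced from the binary bracket $[\cdot,\cdot]_2$ of the $\Linftythree$ algebra of Theorem~\ref{Thm:Linftystructuregenerator} (this structure being independent of the splitting $L\cong A\oplus B$ by Remark~\ref{Rmk:depedenceofsplittings}, as the comparison morphism is the identity in degree $1$). Combining this with the conclusion of Proposition~\ref{Prop:hactionpassingtoH} gives the assertion. There is no substantial obstacle: the only points requiring a moment's attention are the elementary identification $\ker\kappa=\Derivations(L,A)$ and the observation --- already supplied by Proposition~\ref{Prop:hactionpassingtoH} --- that the hypothesis $\kappa\delta=0$ is exactly what makes $\delta\triangleright\overline{x}$ well defined via Equation~\eqref{Eqt:muncondition-12} and makes the derivation identity descend from Equation~\eqref{Eqt:muncondition-21(2)}.
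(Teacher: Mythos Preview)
Your proposal is correct and follows exactly the paper's approach: the corollary is stated immediately after defining $\Derivations(L,A):=\ker\kappa$ and is introduced with ``Following Proposition~\ref{Prop:hactionpassingtoH}, we have another corollary,'' with no further proof given. Your write-up simply unwinds that citation, and the identification $\ker\kappa=\Derivations(L,A)$ is in fact the definition in the paper, so there is nothing more to check.
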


\begin{Ex}
	Let $\mathfrak{h}$ be a complex semisimple Lie algebra,
	and $\mathfrak{t}$ a Cartan subalgebra in $\mathfrak{h}$.
	Let $\mathfrak{h}=\mathfrak{t}\oplus \bigoplus_{\alpha\in\Delta}\mathfrak{h}_\alpha$ be its root decomposition, 
	where $\Delta\subset \mathfrak{t}^\vee$ is the root system of $\mathfrak{h}$.
	Fix a set $\Pi$ of simple roots and denote the set of positive roots by $\Delta^+$.
	For all $\alpha\in\Delta^+$, there exist $e_\alpha\in \mathfrak{t}$, $x_\alpha\in \mathfrak{h}_\alpha$, and $x_{-\alpha}\in \mathfrak{h}_{-\alpha}$
	such that
	$$[e_\alpha,x_\alpha]=2x_\alpha,\quad [e_\alpha,x_{-\alpha}]=-2x_{-\alpha},\quad [x_{\alpha},x_{-\alpha}]=e_{\alpha}.$$
	%Suppose that $H^\vee$ is spanned by $e_{\alpha}^\vee$ in duality to $e_{\alpha}$ for those simple roots $\alpha\in \Delta^0$. 
	Furthermore, there are two standard sets of structure constants $C_{\alpha,\beta},N_{\alpha,\beta}\in \mathbb{Z}$ such that (see \cite[Chapters III,VII]{Humphreysbook}):
	\begin{gather*}
		[e_{\alpha},x_{\beta}]=C_{\alpha,\beta} x_{ \beta},
		\quad
		[e_{\alpha},x_{-\beta}]=-C_{\alpha,\beta} x_{ -\beta},
		\quad \forall \alpha,\beta\in \Delta;\\
		[x_{\alpha},x_{\beta}]=N_{\alpha,\beta} x_{\alpha+\beta},
		\quad
		[x_{-\alpha},x_{-\beta}]=-N_{\alpha,\beta} x_{-\alpha-\beta},
		\quad 
		\forall \alpha,\beta\in \Delta \mbox{ with } \alpha+\beta \in \Delta.
	\end{gather*}
	For the simple roots $\alpha,\beta\in\Pi$ under a given ordering,
	the data $C_{\alpha,\beta}$ form  the corresponding Cartan matrix.
	For $\alpha,\beta\in \Delta \mbox{ with } \alpha+\beta \notin \Delta\cup\{0\}$, set $N_{\alpha,\beta}=0$. However, since $[x_{\alpha},x_{-\alpha}]=e_{\alpha} \notin \bigoplus_{\beta\in\Delta}\mathfrak{h}_\beta$ for $\alpha\in \Delta$, the number $N_{\alpha,-\alpha}$ is undefined.
	
	Now, the Lie pair we take is $L=\mathfrak{h}$
	together with its subalgebra $A=\mathfrak{t}$,
	and hence $B=L/A=\bigoplus_{\alpha\in\Delta}\mathfrak{h}_\alpha$.
	According to Theorem \ref{Thm:Linftystructuregenerator} and Proposition \ref{Prop:2and3bracket}, the space $\Lambda^\bullet \mathfrak{t}^\vee\otimes (\bigoplus_{\alpha\in\Delta}\mathfrak{h}_\alpha)$ 
	admits an ${\Linftythree}$ structure, see also \cite[Theorem 4.22]{BCSX}.  
	
	Since $\mathfrak{h}$ is semisimple,
	we have $\Derivations(\mathfrak{h})\cong \mathfrak{h}$. 
	Therefore, by Theorem \ref{MainTheorem},
	there is a Lie algebra action of $\mathfrak{h}$ on
	$\Lambda^\bullet \mathfrak{t}^\vee\otimes (\bigoplus_{\alpha\in\Delta}\mathfrak{h}_\alpha)$.
	The action maps are specified as follows:
	\begin{itemize}
		\item[(1)] The $0$-action $\mu_0=\kappa:~ \mathfrak{h}\to \mathfrak{t}^\vee\otimes (\bigoplus_{\alpha\in\Delta}\mathfrak{h}_\alpha)$  is  given by 
		$$\kappa(e_{\alpha})=0,
		\quad  \kappa(x_{\alpha})=\sum_{\beta\in \Pi} C_{\beta,\alpha} e^{\vee}_{\beta}\otimes x_{\alpha}, 
		\quad \kappa(x_{-\alpha})=-\sum_{\beta\in \Pi} C_{\beta,\alpha} e^{\vee}_{\beta}\otimes x_{-\alpha},$$
		$  \forall\alpha\in \Delta^+$.
		\item[(2)] The $1$-action  $
		\mu_1:~ \mathfrak{h}\times  \big(\Lambda^\bullet \mathfrak{t}^\vee\otimes (\bigoplus_{\alpha\in\Delta}\mathfrak{h}_\alpha)\big) \to  \Lambda^\bullet \mathfrak{t}^\vee\otimes (\bigoplus_{\alpha\in\Delta}\mathfrak{h}_\alpha)  $ % ($k\geqslant  0$)
		~is given by the generating relations
		\[
		\begin{cases}
			x_{\alpha}\triangleright x_{-\alpha} = 0 & \forall \alpha\in \Delta,\\
			x_{\alpha}\triangleright x_{\beta}=N_{\alpha,\beta}x_{\alpha+\beta}
			&   \forall \alpha,\beta\in \Delta
			\mbox{ with } \alpha+\beta\neq 0,\\
			e_{\alpha}\triangleright x_{\beta}=C_{\alpha,\beta}x_{\beta}
			& \forall \alpha\in \Pi,\beta\in \Delta,
		\end{cases}
		\]
		with vanishing $\varrho_1=0:~\mathfrak{h}\to{\rm Der}^0(\Lambda^{\bullet} \mathfrak{t}^\vee)$. 
		\item[(3)] The $2$-action $
		\mu_2:~ \mathfrak{h}\times  \big(\Lambda^\bullet \mathfrak{t}^\vee\otimes (\bigoplus_{\alpha\in\Delta}\mathfrak{h}_\alpha)\big)^{\otimes 2} \to  \Lambda^{\bullet-1} \mathfrak{t}^\vee\otimes (\bigoplus_{\alpha\in\Delta}\mathfrak{h}_\alpha)  $
		is given by the generating relation $$\mathfrak{h}\triangleright\Big(\bigoplus_{\alpha\in\Delta}\mathfrak{h}_\alpha,\bigoplus_{\alpha\in\Delta}\mathfrak{h}_\alpha\Big)=0,$$
		with
		$\varrho_2:~\mathfrak{h}\times \big(\Lambda^\bullet \mathfrak{t}^\vee\otimes (\bigoplus_{\alpha\in\Delta}\mathfrak{h}_\alpha)\big)\to {\rm Der}^{\bullet-1}(\Lambda^\bullet \mathfrak{t}^\vee)$
		$$
		\begin{cases}
			\varrho_2(x_{\alpha},\omega \otimes x_{-\alpha})=(-1)^{|\omega|+1}\omega\cdot e_\alpha \lrcorner \quad
			& \forall \alpha \in \Delta,\\
			\varrho_2(x_{\alpha},\omega \otimes x_{\beta})=0
			& \forall \alpha,\beta\in \Delta
			\mbox{ with } \alpha+\beta\neq 0,\\
			\varrho_2(e_{\alpha},\omega \otimes x_{\beta})=0 
			& \forall \alpha,\beta\in \Delta,
		\end{cases}
		$$
		where $\omega\in\Lambda^\bullet \mathfrak{t}^\vee$.
		\item[(4)] All higher  actions $\mu_n$ ($n\geqslant 3$) are trivial.
	\end{itemize}
	
\end{Ex}

\begin{Rem}
	In Remark \ref{Rmk:depedenceofsplittings} we have addressed the dependence of splittings  $L=A\oplus B$ which give different but isomorphic $\Linftythree$ algebras $\Omega^\bullet_A(B)$. We point out that the $\Derivations(L)$-action on $\Omega^\bullet_A(B)$ constructed by Theorem \ref{MainTheorem} also depends on the choice of splitting. However, one can prove that such  $\Derivations(L)$-actions are \emph{compatible} with the $\Linftythree$ algebras $\Omega^\bullet_A(B)$ arising from different splittings (see Definition \ref{Def: hactioncompatible}). For this reason,  the $\Derivations(L)$-action given by the theorem is in fact \emph{canonical}.  
\end{Rem}

\section{Gauge equivalences of Maurer-Cartan elements based on Lie algebra actions}
In this part, we review a notion of gauge equivalence of Maurer-Cartan elements introduced by Getzler \cite{Getzler}, and will propose another type of gauge equivalence following  Getzler's formula.

\textbf{Notation:}  Let $\v$ be a local Artinian  $\K$-algebras with residue field ${\K}$. We will denote by $\maximalidealofartin $ the maximal ideal of $\v$. 

Let  $\g$ be an  $L_\infty$ algebra. The graded vector space  $\g\otimes\maximalidealofartin $ now becomes a nilpotent\footnote{This means that the lower central series     $F^i (\g\otimes\maximalidealofartin) $   vanish for  $i $ sufficiently large, where $F^1 (\g\otimes\maximalidealofartin)=\g\otimes\maximalidealofartin$ and, for $i\geqslant 2$,
	$$F^i (\g\otimes\maximalidealofartin):=\sum \limits_{i_1+\cdots+i_k=i}[F^{i_1}(\g\otimes\maximalidealofartin),\cdots,F^{i_k}(\g\otimes\maximalidealofartin)]_k\,.$$} $L_\infty$ algebra whose structure maps are extended from $\g$.

\begin{Def}\label{Def:MC}
	A  \textbf{Maurer-Cartan  element} in an  $L_\infty$ algebra  $\g$ (with coefficient $\maximalidealofartin $)  is an element $\xi\in \g^1\otimes \maximalidealofartin  $ such that
	\begin{equation*}
		\sum\limits_{k=1}^\infty \dfrac{1}{k!}[\xi,\cdots,\xi]_k=0.
	\end{equation*}
	We will denote by ${\rm MC}_{\v}(\g)$ $(\subset \g^1\otimes \maximalidealofartin  )$
	the set of Maurer-Cartan elements in $\g$.
\end{Def}
For any element $\xi \in \g^1\otimes \maximalidealofartin $, the formula
\begin{align*}
	[g_1,\cdots,g_i]^\xi_i
	&=\sum\limits_{k=0}^\infty \frac{1}{k!}[\xi^{\wedge k},g_1,\cdots,g_i]_{i+k}\\
	&=[g_1,\cdots,g_i]_i+[\xi,g_1,\cdots,g_i]_{i+1}+
	\frac{1}{2}[\xi,\xi,g_1,\cdots,g_i]_{i+2}+\cdots
\end{align*}
defines a new sequence of brackets on $\g\otimes \maximalidealofartin  $ known as the ($i$-th) $\xi$-bracket, where
$[\xi^{\wedge k},g_1,\cdots,g_i]_{k+i}$ is an abbreviation for
$[\xi,\cdots,\xi,g_1,\cdots,g_i]_{k+i}$, in which $\xi$ occurs $k$ times.
For example, we have
$$
[g]^\xi_1
=[g]_1+[\xi,g]_2+
\frac{1}{2}[\xi,\xi,g]_3+\cdots.
$$

For $b \in \g^0\otimes \maximalidealofartin  $, $\xi\in {\rm MC}_{\v}(\g)$, define  $e^b *\xi\in \g^1\otimes \maximalidealofartin $ by
$$e^b *\xi:=\xi-\sum \limits_{k=1}^\infty \frac{1}{k!} e^k_\xi(b),$$
where $ e^k_\xi(b)$ are inductively determined by
$$e^1_\xi(b)=[b]^\xi_1, $$
$$e^{k+1}_\xi(b)=\sum\limits_{n=1}^k \frac{1}{n!}
\sum\limits_{\begin{subarray}{c}
		k_1+\cdots+k_n=k \\ k_i\geqslant 1
\end{subarray} }\frac{k!}{k_1!\cdots k_n!}
[b,e^{k_1}_\xi(b),\cdots,e^{k_n}_\xi(b)]^\xi_{n+1}.$$	
It is shown \textit{op. cit.} that $e^b *\xi$ again lands in ${\rm MC}_{\v}(\g)$ and hence one obtains an ``action'' of $\g^0\otimes \maximalidealofartin $ on ${\rm MC}_{\v}(\g)$. Let us call it the \textit{gauge action}. Note that $\g^0\otimes \maximalidealofartin $ is \emph{not} a Lie algebra in general.

\begin{Def}\label{gauge equivalent}Let $\g$ be an $\Linfty$ algebra.
	Two Maurer-Cartan elements $\xi, \eta \in {\rm MC}_{\v}(\g) $ are said to be \textbf{gauge equivalent} if there exists an element $b \in  \g^0\otimes \maximalidealofartin   $
	such that
	$   e^b *\xi=\eta $.
\end{Def}

Following the recipe of gauge actions, we propose another type of gauge equivalence of Maurer-Cartan elements arising from Lie algebra actions. Suppose that the $\Linfty$ algebra $\g$ admits an action by a Lie algebra $\h$, with the structure maps being $\{\mu_n\}$ (or $\triangleright$). For any $h\in \h\otimes \maximalidealofartin  $ and $\xi\in {\rm MC}_{\v}(\g) $, define $e^{h}* \xi\in \g^1\otimes \maximalidealofartin $ as follows:
$$e^{h}* \xi:=  \xi-\sum \limits_{k=1}^\infty \frac{1}{k!} e^k_\xi(h ),$$
where $e^k_\xi(h )\in \g^1\otimes \maximalidealofartin $ ($k\geqslant 1$) is inductively defined by
\begin{gather*}
	e^1_\xi(h )
	=\sum_{i=0}^\infty\frac{1}{i!}\mu_{i}(\xi^{\wedge i},h)
	=\kappa(h)-h\triangleright(\xi) 
	+\frac{1}{2}h\triangleright(\xi,\xi)
	-\frac{1}{6} h\triangleright(\xi,\xi,\xi)
	+\cdots, \label{Eqt:e1xib0}
\end{gather*}
and
\begin{gather*}
	e^{k+1}_\xi(h )
	=\sum\limits_{n=1}^k \frac{1}{n!}
	\sum\limits_{\begin{subarray}{c}
			k_1+\cdots+k_n=k \\ k_i\geqslant 1
	\end{subarray} }\frac{k!}{k_1!\cdots k_n!}
	\sum\limits_{j=0}^\infty \frac{(-1)^j}{j!}
	h \triangleright \big(\xi^{\wedge j} ,e^{k_1}_\xi(h),\cdots,e^{k_n}_\xi(h)\big) \,. \label{Eqt:enxib0}
\end{gather*} 

Again, one can prove that $e^{h}* \xi$ belongs to ${\rm MC}_{\v}(\g) $. In turn, we  obtain a new type of gauge action which is given by the Lie algebra $\h\otimes \maximalidealofartin $ on ${\rm MC}_{\v}(\g) $, and hence the notion of $\h$-gauge equivalence:
\begin{Def}\label{Def:WeakGaugeEquivalence}
	With the assumptions as above, two Maurer-Cartan elements $\xi$ and $\eta\in {\rm MC}_{\v}(\g)$ are said to be \textbf{$\h$-gauge equivalent} if there exists an $h \in \h\otimes \maximalidealofartin $ such that
	$e^{h}* \xi=\eta$.
\end{Def}
More properties of   $\h$-gauge equivalences will be studied in the future. Just for this note, let us turn back to the settings of Section \ref{Sec:Mainresults},  Theorem \ref{MainTheorem} in particular:  $\g$ is the $\Linftythree$ algebra $\Omega^\bullet_A(B)$ with an $\h=\Derivations(L)$ action. We have $\g^0=\Gamma(B)$. For an element $b\in \Gamma(B)\otimes \maximalidealofartin $, consider the operator
$$\adLb:=[b,\cdot]_L \in \mathrm{End}_{  \v }( \Gamma(L)\otimes \maximalidealofartin  ) .$$  
We can check that, $\adLb$ is indeed a derivation, i.e. $ \adLb\in \Derivations(L)\otimes \maximalidealofartin$, and moreover, we have
\begin{eqnarray*}
	\kappa(\adLb) &=& [b]_1  (= \dBott b ),\\
	{ \adLb \triangleright ( X)   } &=& {[b,X]_2},
	\\
	{ \adLb \triangleright ( X,Y)  } &=& {[b,X,Y]_3,}
\end{eqnarray*} for all $X$ and $Y\in \Omega_A^\bullet(B)  \otimes \maximalidealofartin $. From these relations, we can  prove the following fact.

\begin{Thm}\label{Thm:adb=baction} For any $b\in \Gamma(B)\otimes \maximalidealofartin$,       the  $\Derivations(L)\otimes \maximalidealofartin$-gauge action by $\delta =\adLb  $ coincides with the   gauge action by $b$, i.e.
	$$e^{\adLb}*\xi=e^b*\xi,\quad \forall \xi \in {\rm MC}_{\v}\big(\Omega_A^\bullet(B)\big).$$
\end{Thm}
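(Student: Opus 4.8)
The plan is to reduce the identity $e^{\adLb}*\xi=e^b*\xi$ to a term-by-term comparison of the two recursively defined sequences $\{e^k_\xi(\adLb)\}_{k\geqslant 1}$ and $\{e^k_\xi(b)\}_{k\geqslant 1}$: since $e^{\adLb}*\xi=\xi-\sum_{k\geqslant 1}\frac{1}{k!}e^k_\xi(\adLb)$ and $e^b*\xi=\xi-\sum_{k\geqslant 1}\frac{1}{k!}e^k_\xi(b)$, it suffices to prove $e^k_\xi(\adLb)=e^k_\xi(b)$ for every $k\geqslant 1$. The crucial input is the package of identities recorded just before the statement, which I would first make precise as the single assertion
\[
\mu_n(\adLb,X_1,\dots,X_n)=[b,X_1,\dots,X_n]_{n+1},\qquad n\geqslant 0,
\]
valid for all $X_i\in\Omega^\bullet_A(B)\otimes\maximalidealofartin$; indeed the cases $n=0,1,2$ are respectively $\kappa(\adLb)=[b]_1$, $\adLb\triangleright X=[b,X]_2$, $\adLb\triangleright(X,Y)=[b,X,Y]_3$, while for $n\geqslant 3$ both sides vanish because $\Omega^\bullet_A(B)$ is an $\Linftythree$ algebra. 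I would recall why these three identities hold by expanding $\delta=\adLb$ in the explicit action maps of Theorem~\ref{MainTheorem}, reducing to $\Gamma(B)$-generators by means of properties (ii), (iii) of that theorem and the generating relations (5), (6) of Theorem~\ref{Thm:Linftystructuregenerator}, and then matching with the explicit brackets of Proposition~\ref{Prop:2and3bracket} via $\eth_ba={\rm pr}_A[b,a]_L$, $\nabla_ab={\rm pr}_B[a,b]_L$, $\beta(b,b')={\rm pr}_A[b,b']_L$.

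The heart of the argument is then an induction on $k$. For the base case $k=1$, I would substitute the identities above into the given expansion
\[
e^1_\xi(\adLb)=\kappa(\adLb)-\adLb\triangleright\xi+\tfrac12\adLb\triangleright(\xi,\xi)-\cdots=[b]_1-[b,\xi]_2+\tfrac12[b,\xi,\xi]_3
\]
(all further terms vanish), and then move $b$, which has degree $0$, to the last slot of each bracket: transposing $b$ past one copy of the degree-one element $\xi$ in a graded skew-symmetric bracket contributes a sign $-1$, so $[b,\xi^{\wedge i},\dots]_{i+1}=(-1)^i[\xi^{\wedge i},b,\dots]_{i+1}$, and this factor $(-1)^i$ cancels the $(-1)^i$ already sitting in front of $\adLb\triangleright(\xi^{\wedge i})$. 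Hence $e^1_\xi(\adLb)=\sum_{j\geqslant 0}\frac{1}{j!}[\xi^{\wedge j},b]_{j+1}=[b]^\xi_1=e^1_\xi(b)$. For the inductive step, assuming $e^{k_i}_\xi(\adLb)=e^{k_i}_\xi(b)$ for all $k_i\leqslant k$, I would insert this into the recursion defining $e^{k+1}_\xi(\adLb)$, rewrite each $\adLb\triangleright\big(\xi^{\wedge j},e^{k_1}_\xi(b),\dots,e^{k_n}_\xi(b)\big)$ as $[b,\xi^{\wedge j},e^{k_1}_\xi(b),\dots,e^{k_n}_\xi(b)]_{j+n+1}$, move $b$ past the $j$ copies of $\xi$ to produce a $(-1)^j$ that cancels the $(-1)^j$ appearing in the recursion, and finally use $\sum_{j\geqslant 0}\frac{1}{j!}[\xi^{\wedge j},b,e^{k_1}_\xi(b),\dots,e^{k_n}_\xi(b)]_{j+n+1}=[b,e^{k_1}_\xi(b),\dots,e^{k_n}_\xi(b)]^\xi_{n+1}$. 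What remains is precisely Getzler's recursion for $e^{k+1}_\xi(b)$, which closes the induction; summing then gives $e^{\adLb}*\xi=e^b*\xi$.

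The only genuinely delicate part, and the place where the proof can go wrong, is the sign bookkeeping: one must verify carefully that the alternating signs hard-wired into the $\Derivations(L)$-gauge recursion (the $(-1)^j$ in the formula for $e^{k+1}_\xi$ and the $(-1)^i$ implicit in $e^1_\xi$) are exactly the Koszul signs produced by commuting the degree-zero element $b$ through strings of the degree-one element $\xi$ in a graded skew-symmetric bracket, so that the $\Derivations(L)$-gauge formulas collapse onto Getzler's $\xi$-twisted brackets $[\,\cdot\,]^\xi$. A second, more elementary but still careful, point is the verification of the three generating identities $\kappa(\adLb)=[b]_1$, $\adLb\triangleright X=[b,X]_2$, $\adLb\triangleright(X,Y)=[b,X,Y]_3$ themselves, which hinges on the interplay of ${\rm pr}_A$, ${\rm pr}_B$, the fixed splitting $L\cong A\oplus B$, and the defining Leibniz rules of a Lie algebroid derivation; once those are in hand, everything else is formal.
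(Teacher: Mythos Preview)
Your proposal is correct and is precisely the natural elaboration of what the paper does: the paper only states the three relations $\kappa(\adLb)=[b]_1$, $\adLb\triangleright X=[b,X]_2$, $\adLb\triangleright(X,Y)=[b,X,Y]_3$ and then asserts ``From these relations, we can prove the following fact'' without supplying further details. Your term-by-term induction on $k$, together with the sign analysis reducing the $(-1)^j$ in the $\Derivations(L)$-gauge recursion to the Koszul sign from commuting the degree-$0$ element $b$ past copies of the degree-$1$ element $\xi$, is exactly the argument implicit in that remark.
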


From this theorem, we  see that our newly introduced $\Derivations(L)\otimes \maximalidealofartin$-gauge action recovers  the  classical one  given by Getzler in \cite{Getzler}. It comes from the $\Derivations(L)$-action on $\Omega_A^\bullet(B)$ as detailed in the previous Theorem \ref{MainTheorem}. We have a reason to call such actions $\delta\triangleright(\cdots)$ ($\delta\in \Derivations(L)$) internal symmetries --- the Lie algebra $\Derivations(L)$ exists naturally; while the space $\Gamma(B)$ is not a Lie algebra in general, and  its role   depends on the splitting $L\cong A\oplus B$. Of course,  isomorphism classes of ${\rm MC}_{\v}\big(\Omega_A^\bullet(B)\big)$ up to $\Derivations(L)\otimes \maximalidealofartin$-gauge equivalences (i.e. the associated deformation space)  will have fewer elements than up to $\Gamma(B)\otimes \maximalidealofartin$-gauge equivalences. The new problems caused by this way of thinking await our follow-up research.

\end{document}